\titlespacing{\paragraph}{0em}{0em}{0.5em}
\titlespacing{\subparagraph}{0em}{0em}{0.5em}
\theoremstyle{definition}
\newtheorem{definition}{Definition}[]
\newtheorem{example}[definition]{Example}
\theoremstyle{plain}
\newtheorem{theorem}[definition]{Theorem}
\newtheorem{corollary}[definition]{Corollary}
\newtheorem{conjecture}[definition]{Conjecture}
\newtheorem{lemma}[definition]{Lemma}
\newtheorem{proposition}[definition]{Proposition}
\renewenvironment{proof}{\noindent\textsc{Proof.}\quad}{\qed}
\newcommand\Z{\mathop{}\!\mathbb{Z}}
\newcommand\Q{\mathop{}\!\mathbb{Q}}
\newcommand\ok{\mathop{}\!\mathfrak{o}}
\newcommand\OK{\mathop{}\!\mathfrak{O}}
\newcommand\Mat{\mathop{}\!\mathrm{Mat}}
\newcommand\Hom{\mathop{}\!\mathrm{Hom}}
\newcommand\Ext{\mathop{}\!\mathrm{Ext}}
\newcommand\GL{\mathop{}\!\mathrm{GL}}
\newcommand\Cl{\mathop{}\!\mathrm{Cl}}
\newcommand\mo{\mathop{}\!\mathrm{mod}}
\begin{document}
\title{Similarity of Matrices over Dedekind Rings} 
\author{Ziyang ZHU} 
\date{\today}
\address{School of Mathematical Sciences, Capital Normal University, Beijing 100048, China}
\email{zhuziyang@cnu.edu.cn}
\maketitle

\let\thefootnote\relax
\footnotetext{MSC2020: 11S45, 14G20, 16H20.}

\begin{abstract}
We extend Latimer and MacDuffee's theorem to a general commutative domain and apply this result to study similarity of matrices over integral rings of number fields. We also conjecture similarity over discrete valuation rings can be descent by a finite covering and verify this conjecture for $2\times2$ matrices and separable characteristic polynomials.
\end{abstract}

\bigskip

\section{Introduction}\label{s1}
It is well-known that two matrices over a field $k$ is similar over $k$ if and only if they are similar over a field extension $k'/k$. It is natural to ask if such property can be extended to certain rings, for example a Discrete Valuation Ring (DVR) $R$ with the fractional field $K$. Inspired by Serre and Grothendieck's conjecture \cite{pa} that the restriction map
\[H_{\mathrm{\acute{e}t}}^1(R,G)\to H_{\mathrm{\acute{e}t}}^1(K,G\times_R K)\]
 has a trivial kernel for a reductive group scheme $G$ over $R$, one can expect two matrices over $R$ are similar over $R$ if they are similar over $K$. However, such expectation is not true (see \S\ref{s4}). We wonder whether the descent property for similarity of matrices still holds. More precisely, we propose the following conjecture.

\begin{conjecture}\label{1}
Let $R$ be a DVR with the fractional field $K$ and $A, B$ be $n\times n$ matrices over $R$. Suppose that $L/K$ is a finite extension and $S$ is the integral closure of $R$ in $L$. If $A$ and $B$ are similar over $S$, then $A$ and $B$ are similar over $R$.
\end{conjecture}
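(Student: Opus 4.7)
The plan is to translate similarity into a module-isomorphism descent question via the generalized Latimer--MacDuffee theorem established earlier in the paper, and then to attack the resulting descent via a Picard-group analysis of a suitable order. Since $A$ and $B$ are similar over $S$ they share a characteristic polynomial $f(x)\in R[x]$; set $R_f:=R[x]/(f(x))$ and $S_f:=S[x]/(f(x))=R_f\otimes_R S$. Viewing $R^n$ as two $R_f$-modules $M_A,M_B$ with $x$ acting by $A$ (resp.\ $B$), similarity over $R$ is equivalent to $M_A\cong M_B$ as $R_f$-modules, while the hypothesis reads $M_A\otimes_R S\cong M_B\otimes_R S$ as $S_f$-modules. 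Latimer--MacDuffee identifies isomorphism classes of such modules with a Picard-type invariant of $R_f$, so the conjecture reduces to showing that the natural specialization map to the analogous invariant of $S_f$ is injective, at least on classes represented by matrices in $\Mat_n(R)$.

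Since $R$ is a DVR and $S$ is finitely generated torsion-free over $R$, $S$ is free and hence faithfully flat. Therefore
\[
\Hom_{R_f}(M_A, M_B)\otimes_R S \;=\; \Hom_{S_f}(M_A\otimes_R S,\; M_B\otimes_R S),
\]
and a $\varphi\in\Hom_{R_f}(M_A, M_B)$ is an isomorphism exactly when $\det\varphi\in R^{*}$. Picking an $R$-basis $e_1,\dots,e_r$ of the free $R$-module $\Hom_{R_f}(M_A,M_B)$, the determinant becomes a polynomial $p\in R[y_1,\dots,y_r]$; the hypothesis supplies $(s_1,\dots,s_r)\in S^r$ with $p(s_1,\dots,s_r)\in S^{*}$, and we seek a tuple in $R^r$ with image in $R^{*}=S^{*}\cap R$.

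To control this I would exploit the conductor square of $R_f$ inside its normalization $\widetilde{R_f}$, the integral closure of $R$ in $K_f=K[x]/(f(x))$. Because $\widetilde{R_f}$ is a semi-local Dedekind domain it is a PID with trivial Picard group, and the associated Mayer--Vietoris sequence expresses $\mathrm{Pic}(R_f)$ in terms of unit groups of finite artinian quotients. Comparing this sequence for $R_f$ with the one for $S_f$ reduces the descent question to a unit-group descent problem, which should be tractable whenever the generic fiber is étale---that is, when $f$ is separable---matching one of the cases verified in the paper. The main obstacle arises at ramified or inseparable places: when the residue-field extensions are purely inseparable, or $K_f$ is non-reduced, the comparison of unit groups can have a nontrivial kernel that faithfully flat descent alone does not detect. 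A proof of the full conjecture will probably require either an inductive reduction along a tower of prime-degree extensions isolating one ramification contribution at a time, or a new descent principle for modules over $R$-orders that accounts for conductor contributions beyond Mayer--Vietoris.
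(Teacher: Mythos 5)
First, note that the statement you are proving is stated in the paper as a \emph{conjecture}: the paper itself only establishes it in two special cases (separable characteristic polynomial, and $n=2$; see Theorem \ref{3}), so no complete proof exists to compare against, and your proposal is likewise explicitly not a complete proof --- you concede at the end that the inseparable and ramified situations remain open. Within the separable case, your route is genuinely different from the paper's: the paper proves an Artinian reduction ($\overline{M_A}\cong\overline{M_B}$ via Krull--Schmidt) and then corrects an approximate isomorphism to a genuine $\ok$-isomorphism using the hypothesis that $\Ext^1_{\ok}(M_A,M_B)$ is $R$-torsion (Lemmas \ref{4} and \ref{5}), whereas you propose a conductor-square/Mayer--Vietoris analysis of a Picard-type invariant. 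The paper's argument has the advantage of avoiding any class-group computation; yours, if completed, would give more structural information about the failure locus.

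There are, however, two concrete gaps in your sketch beyond the admitted one. First, the generalized Latimer--MacDuffee correspondence (Theorem \ref{2}) identifies similarity classes with \emph{all} full ideals of $R[x]/(f)$ that are free $R$-modules, modulo scaling by non-zero-divisors --- not with the invertible ideals. Over an order such as $R[\theta]$ most of the classes appearing in the paper's classification (e.g.\ $R\pi^k\oplus R(\theta-\tfrac{a}{2})$ in Lemma \ref{16}) are \emph{not} invertible $R[\theta]$-modules, so the Mayer--Vietoris sequence for $\mathrm{Pic}$ of the conductor square simply does not see them; you would need a descent statement for the full monoid of ideal classes, which is not supplied by that exact sequence. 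Second, the step where you pass from a tuple $(s_1,\dots,s_r)\in S^r$ with $p(s_1,\dots,s_r)\in S^{\times}$ to a tuple in $R^r$ with value in $R^{\times}$ is exactly the hard point and is not a consequence of faithful flatness: reducing modulo the maximal ideal, you need a nonzero value of a polynomial over the residue field $\kappa$ of $R$ knowing only that it has one over the residue field of $S$, and when $\kappa$ is finite this can fail for a general polynomial. The paper circumvents this by the Ext-torsion correction argument rather than by evaluating the determinant form; some substitute for that mechanism (or a multiplicativity/norm structure on $p$) is needed to make your step go through. Finally, your plan says nothing about the $n=2$ inseparable case in characteristic $2$, which the paper does settle by a direct classification (Lemma \ref{21} and Theorem \ref{23}), so even as a program your proposal covers strictly less than what is already proved.
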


It should be pointed out that such descent property does not hold over $\mathbb Z$. For example, consider two matrices
\[A:=\left(\begin{array}{cc}0&1\\-6&0\end{array}\right)\text{ and }B:=\left(\begin{array}{cc}0&2\\-3&0\end{array}\right)\]
in $\Mat_2(\Z)$. If there exists a matrix $U:=\left(\begin{array}{cc}x&y\\z&w\end{array}\right)\in\GL_2(\Z)$ such that $UA=BU$, then
\[\det(U)=2w^2+3y^2=\pm1\]
has a solution in $\Z$, which is absurd. But the equation has a solution $w=\sqrt{-1},y=1$ in the Gaussian ring $\Z[\sqrt{-1}]$, which means $A$ and $B$ are similar over $\Z[\sqrt{-1}]$.

But \cite[Theorem 7]{gu} proved that, for a ring $\mathcal{O}_K$ of algebraic integers, $A,B\in\Mat_n(\mathcal{O}_K)$ are similar over all the local rings of $\mathcal{O}_K$, if and only if $A,B$ are similar over some finite integral extension of $\mathcal{O}_K$. This partially proves Conjecture \ref{1}, since DVRs are special Dedekind domains and \cite{rz} claimed '$\mathcal{O}_K$' can be replaced by a general Dedekind domain, if certain cohomological condition is satisfied. However, \cite[Theorem 7]{gu} did not mention the condition of cohomology. Indeed, this condition is not always satisfied in general cases (for instance, see Example \ref{7}).

On the other hand, in \cite{lm}, Latimer and MacDuffee provided an important tool to study the similarity of matrices over $\mathbb Z$. As pointed in \cite[Lemma 6.4]{wx}, this result holds for any Principal Ideal Domain (PID). Previous works by Sarkisjan \cite{sa} and Grunewald \cite{gr} showed the decidability of integral similarity computationally. Some specific methods are applied for the $2\times2$ case by Behn and Van der Merwe \cite{bv} (for a dynamical application, see also \cite{elmv}), and the $3\times3$ case by Appelgate and Onishi \cite{ao}. However, the higher dimensional situation is challenging, there are only some scattered results. For instance, \cite{pss} considered matrices over local rings of small length. On the computational front, Magma handles finite-order and $2\times2$ matrices efficiently \cite{ops}. Beyond that, David Husert's thesis \cite{hu} presented a more general algorithm.

In this paper, we first extend the result of Latimer and MacDuffee to a general integral domain (\cite{eg} provided a broader and coarser extension):

\begin{theorem}\label{2}
Let $R$ be an integral domain with fractional field $K$ and $f$ be a monic polynomial over $R$ with $(f,f')=1$, where $f'$ is the derivative of $f$. If $\GL_n(R)$ acts on the set $\Mat_n(R)$ of $n\times n$ matrices over $R$ by conjugation, then there is a one-to-one correspondence
\begin{multline*}
\big\{A\in\Mat_n(R):\det(xI_n-A)=f(x)\big\}\big/\GL_n(R)\\
\longleftrightarrow\big\{J\text{ ideal of }R[x]/(f):J\text{ a free $R$-module of rank $n$}\big\}\big/\sim,
\end{multline*}
where $J_1\sim J_2$ for two ideals $J_1$ and $J_2$ in $R[x]/(f)$ means that there are $\alpha_1$ and $\alpha_2$ in $R[x]/(f)$ which are not zero divisors such that $\alpha_1 J_1= \alpha_2 J_2$.
\end{theorem}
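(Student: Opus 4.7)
The plan is to construct the bijection explicitly in both directions, using the separability hypothesis $(f,f')=1$ to pass to the fraction field $K$, where $K[x]/(f)$ becomes semisimple and the module theory is transparent.

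For the forward map, given $A\in\Mat_n(R)$ with $\det(xI_n-A)=f(x)$, I would turn $R^n$ into an $R[x]/(f)$-module by letting $x$ act as $A$, which is well defined by Cayley--Hamilton. Tensoring with $K$ yields a $K[x]/(f)$-module structure on $K^n$. Since $(f,f')=1$ in $K[x]$, the polynomial $f$ is separable, so $K[x]/(f)\cong\prod_i K_i$ is a finite product of field extensions of $K$. Because $A$ has characteristic polynomial $f$ with distinct roots, its minimal polynomial over $K$ is again $f$, so $K^n$ is a faithful $K[x]/(f)$-module of $K$-dimension $n=\dim_K K[x]/(f)$; semisimplicity then forces every simple factor $K_i$ to appear with multiplicity exactly one, yielding a $K[x]/(f)$-module isomorphism $\phi\colon K^n\xrightarrow{\sim}K[x]/(f)$. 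The image $\phi(R^n)$ is an $R[x]/(f)$-stable, finitely generated $R$-submodule of $K[x]/(f)$; multiplying by a suitable nonzero $c\in R$ (which is a non-zero-divisor in $R[x]/(f)$, since $f$ is monic so polynomial division by $f$ works over $R$) clears denominators and produces an ideal $J_A:=c\,\phi(R^n)\subseteq R[x]/(f)$ that is $R$-isomorphic to $R^n$, hence free of rank $n$.

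For the reverse map, starting from an ideal $J\subseteq R[x]/(f)$ that is $R$-free of rank $n$, I would let $m_x$ denote multiplication by $x$ on $J$, an $R$-linear endomorphism. Writing $m_x$ in any $R$-basis of $J$ produces a matrix $A_J\in\Mat_n(R)$, well defined up to $\GL_n(R)$-conjugation. Its characteristic polynomial is computed by tensoring with $K$: the flat inclusion $J\otimes_R K\hookrightarrow K[x]/(f)$ is $K$-linear of equal $K$-dimension $n$, hence a $K[x]/(f)$-module isomorphism, under which $m_x$ becomes multiplication by $x$ on $K[x]/(f)$, whose characteristic polynomial is $f$. For compatibility with $\sim$, if $\alpha_1 J_1=\alpha_2 J_2$, multiplication by $\alpha_2/\alpha_1\in(K[x]/(f))^{\times}$ restricts to an $R[x]/(f)$-module isomorphism $J_1\xrightarrow{\sim}J_2$, which pushes bases to bases and conjugates $A_{J_1}$ into $A_{J_2}$.

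Well-definedness of the forward map (and that it factors through $\GL_n(R)$-conjugacy) is the heart of the argument: any two $K[x]/(f)$-module isomorphisms $K^n\to K[x]/(f)$ differ by multiplication by a unit $u\in(K[x]/(f))^{\times}$, and by writing $u=\alpha_2/\alpha_1$ with $\alpha_i$ non-zero-divisors in $R[x]/(f)$, one recovers precisely the equivalence $\alpha_1 J_A=\alpha_2 J_A'$; analogously, conjugate matrices yield isomorphic $R[x]/(f)$-modules and hence equivalent ideals. To close the loop, I would unwind the definitions: the transported module $(R^n,A)\cong(J_A,m_x)$ shows $A_{J_A}$ is $\GL_n(R)$-conjugate to $A$, and for an ideal $J$ the reconstruction returns $(J,m_x)$ itself. \emph{The main obstacle} I anticipate is the bookkeeping around $\sim$ combined with the fact that $R[x]/(f)$ need not be a domain: one must consistently work inside the semisimple ambient ring $\prod_i K_i$ and exploit that every unit there lifts to a ratio of non-zero-divisors of $R[x]/(f)$, which is exactly what the separability hypothesis $(f,f')=1$ guarantees.
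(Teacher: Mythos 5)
Your proposal is correct and follows essentially the same route as the paper: the paper's conjugation of $A$ to the companion matrix $A_0$ over $K$ together with the row vector $(1,\overline{x},\dots,\overline{x}^{n-1})$ is precisely your $K[x]/(f)$-module isomorphism $K^n\cong K[x]/(f)$ obtained from separability, and its step showing $g^{-1}h\in K[A]$ is your observation that two such isomorphisms differ by a unit expressible as a ratio of non-zero-divisors of $R[x]/(f)$. The differences are purely presentational (abstract module language versus explicit matrix manipulations), apart from a harmless slip: the multiplier carrying $J_1$ onto $J_2$ when $\alpha_1J_1=\alpha_2J_2$ is $\alpha_1/\alpha_2$, not $\alpha_2/\alpha_1$.
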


Unlike \cite{pi} and \cite{aopv}, we apply such correspondence to classifying the similarity classes of $2\times2$ matrices over a DVR. As an application of \cite{rz} and \cite{wx}, we show that:

\begin{theorem}\label{3}
Conjecture \ref{1} is true if
\begin{itemize}
\item the characteristic polynomial is separable.

\item $n=2$.
\end{itemize}
\end{theorem}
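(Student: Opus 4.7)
The plan is to apply Theorem \ref{2} to convert the conjugacy problem into a descent question for equivalence classes of ideals in $R[x]/(f)$, where $f \in R[x]$ is the common characteristic polynomial of $A$ and $B$, and then verify descent in each of the two regimes, invoking \cite{rz} and \cite{wx} for the technical work.

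\textbf{Separable case.} If $(f,f') = 1$ in $R[x]$, then Theorem \ref{2} applies over both $R$ and $S$, identifying similarity classes of matrices with characteristic polynomial $f$ with equivalence classes of rank-$n$ free ideals in $R[x]/(f)$ and $S[x]/(f)$ respectively. The hypothesis forces $\mathrm{disc}(f)$ to be a unit in $R$, so $R[x]/(f)$ is finite étale over the DVR $R$ and hence a finite product of unramified DVR extensions of $R$; each factor is a PID, so the set of rank-$n$ ideal classes is a singleton. Consequently every matrix over $R$ (resp.\ $S$) with characteristic polynomial $f$ is similar to every other, and there is nothing further to prove.

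\textbf{Case $n = 2$.} Write $f(x) = x^2 - \sigma x + \delta$ with $\sigma = \mathrm{tr}(A)$ and $\delta = \det(A)$. If $(f,f') = 1$ in $R[x]$, the separable case applies. If $f$ is separable over $K$ but $\mathrm{disc}(f) \in \pi R$, then $R[x]/(f)$ is a non-maximal order in the étale quadratic $K$-algebra $K[x]/(f)$; via Theorem \ref{2} the question reduces to showing that the natural map on ideal class sets from $R[x]/(f)$ to $S[x]/(f)$ has trivial kernel. I would settle this by writing every rank-$2$ ideal of $R[x]/(f)$ in the local normal form provided by \cite{wx} for quadratic orders over a DVR (a pair consisting of a conductor exponent and a generator modulo lower-order terms), comparing the analogous normal form over $S[x]/(f)$, and invoking the cohomological descent criterion of \cite{rz} to handle the residual obstruction. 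If $f$ is inseparable over $K$, then $f = (x - a)^2$ for some $a \in K$ that is integral over $R$, hence $a \in R$; after the shift $A \mapsto A - aI$, $B \mapsto B - aI$, both matrices are nilpotent of square zero. Over a DVR every nonzero such matrix is conjugate to $\bigl(\begin{smallmatrix}0 & \pi^k \\ 0 & 0\end{smallmatrix}\bigr)$ for a unique $k \geq 0$, characterized intrinsically as the valuation of a primitive generator of $\mathrm{image}(A) \subset R^2$. Base change to $S$ multiplies this invariant by the ramification index $e(L/K)$, so $S$-similarity forces equality of the invariants over $R$.

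\textbf{Main obstacle.} The principal difficulty is the middle subcase of $n = 2$, where $R[x]/(f)$ is a genuine non-maximal order whose ideal class set may well be nontrivial and where injectivity of the class map under base change is not immediate. Establishing that injectivity requires combining the explicit local structure of rank-$2$ ideals of quadratic orders over a DVR (from \cite{wx}) with the cohomological descent criterion of \cite{rz}; without both inputs, one is left with an $H^1$ obstruction that does not automatically vanish.
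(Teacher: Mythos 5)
There is a genuine gap, and it starts with a misreading of the hypothesis. \enquote{Separable} here means $(f,f')=1$ in $K[x]$ (this is all that Lemma \ref{5} uses: $K[x]/(f)$ is semisimple), \emph{not} that $\mathrm{disc}(f)$ is a unit of $R$. So your claim that separability forces $R[x]/(f)$ to be finite \'etale over $R$, hence a product of DVRs with a single ideal class, is false: take $f(x)=x^2-\pi^2u$ with $u$ a non-square unit and $\mathrm{char}(K)\neq2$; this $f$ is irreducible and separable, yet by Proposition \ref{17} the order $R[x]/(f)$ has exactly two ideal classes, i.e.\ two distinct similarity classes with this characteristic polynomial. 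Consequently your \enquote{separable case} proves only the trivial sub-case of unit discriminant, and the actual content of the first bullet --- and, after your reduction, of the second bullet as well --- all lands in your \enquote{middle subcase}, which you explicitly leave as a plan (\enquote{I would settle this by\dots}) rather than an argument. The paper's proof of the separable case is entirely different and does not go through ideal classes at all: it reduces modulo $\pi^t$, applies Krull--Schmidt over the Artinian ring $\ok/\pi^t\ok$ to get $\overline{M_A}\cong\overline{M_B}$, and then lifts this to an $\ok$-isomorphism $M_A\cong M_B$ using that $\Ext_{\ok}^1(M_A,M_B)$ is $R$-torsion (Lemmas \ref{4} and \ref{5}); the choice $t=s+1$ with $s$ the torsion exponent is the whole point. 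Neither \cite{rz} nor \cite{wx} supplies the quadratic-order normal forms or the descent statement off the shelf; for $n=2$ the paper has to build the full classification (Lemmas \ref{15}--\ref{21}, assembled in Theorem \ref{22}) and then check descent by exhibiting or obstructing explicit transition matrices over $S$ (Theorem \ref{23}).

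A second concrete error sits in your inseparable $n=2$ subcase. In characteristic $2$ an inseparable quadratic is $f(x)=x^2-b$ with $b\notin K^2$; such an $f$ is irreducible over $K$ and does \emph{not} factor as $(x-a)^2$ with $a\in K$, so your reduction to square-zero nilpotent matrices covers only the split situation. The genuinely inseparable irreducible case is handled in the paper by Lemma \ref{21} (where the class number is even infinite), and the descent argument for it in Theorem \ref{23} matches the classes over $R$ with the split classes of Proposition \ref{14} over $S$ via an explicit conjugation. Your square-zero analysis (invariant $=$ the power of $\pi$ scaling a primitive sublattice, multiplied by the ramification index under base change) is correct as far as it goes, but it addresses only a fragment of the inseparable case.
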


One can also apply Theorem \ref{2} to study the similarity of matrices over the ring of integers of a number field (see \S\ref{s5}). We introduce an algorithm that fully classifies the similarity classes of $n\times n$ matrices within this context and provide several examples to highlight the differences between this case and the scenario involving PIDs.

The paper is organized as follows. In \S\ref{s2}, we prove the first part of Theorem \ref{3} by completing the unnoticed parts in \cite{gu}. In \S\ref{s3}, we prove Theorem \ref{2}, which establishes a one-to-one correspondence between ideal classes and similarity classes of matrices. In \S\ref{s4}, as an application of Theorem \ref{2}, we classify the similarity classes of $2\times2$ matrices over a DVR and apply this classification to prove the remaining part of Theorem \ref{3}. In \S\ref{s5}, we study the similarity classes over the ring of algebraic integers.
\subsection*{Acknowledgments}
The author expresses his gratitude to Mingqiang FENG and Fei XU for their valuable discussions, they made significant contributions to \S\ref{s2}-\S\ref{s4}. Additionally, I would like to thank Zi'ang XU for providing me with valuable references.

\section{Cohomology is Torsion in Semi-simple Case}\label{s2}

Let $(R,\pi)$ be a DVR with fractional field $K$. Let $L/K$ be a finite field extension, and $S$ be the integral closure of $R$ in $L$. For a polynomial $f\in R[x]$, write $\ok=R[x]/(f)$ and $\OK=S[x]/(f)$. For a matrix $A\in\Mat_n(R)$, it induces an $\ok$-module $M_A:=R^{\oplus n}$ with $\ok$-action
\[(g(x)\mo~f)(r_1,\cdots,r_n):=(r_1,\cdots,r_n)g(A)^T.\]

Given two matrices $A,B\in\Mat_n(R)$ with the same separable characteristic polynomial $f$, we have:

\begin{lemma}[\cite{rz}]\label{4}
If $\Ext_{\ok}^1(M_A,M_B)$ is a torsion $R$-module, then $M_A\otimes_{\ok}\OK\cong M_B\otimes_{\ok}\OK$ (as $\OK$-modules) if and only if $A$ and $B$ are similar over $R$.
\end{lemma}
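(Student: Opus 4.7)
The plan is to convert the similarity statement into a descent problem at the module level. A matrix $U \in \GL_n(R)$ with $UA = BU$ is exactly an $\ok$-linear isomorphism $M_A \to M_B$, so $A$ and $B$ are similar over $R$ (resp.\ over $S$) if and only if $M_A \cong M_B$ as $\ok$-modules (resp.\ $M_A \otimes_\ok \OK \cong M_B \otimes_\ok \OK$ as $\OK$-modules). Under this dictionary, the ``if'' direction of the lemma is immediate: tensor an $\ok$-isomorphism by $S$ over $R$ to obtain an $\OK$-isomorphism. The substantive content is the converse.

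For the converse, the first step is to set up flat base change. Since $R$ is a DVR, $S$ is torsion-free and hence flat, in fact faithfully flat, over $R$; consequently $\OK = \ok \otimes_R S$ is flat over $\ok$. Taking a resolution of $M_A$ by finitely generated free $\ok$-modules (which exists because $\ok$ is Noetherian) and base-changing along $R \to S$ yields a resolution of $M_A \otimes_\ok \OK$ over $\OK$, hence natural identifications
\[ \Hom_\ok(M_A, M_B) \otimes_R S \;\cong\; \Hom_\OK(M_A \otimes_\ok \OK, M_B \otimes_\ok \OK), \]
\[ \Ext^1_\ok(M_A, M_B) \otimes_R S \;\cong\; \Ext^1_\OK(M_A \otimes_\ok \OK, M_B \otimes_\ok \OK). \]

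The next step uses the separability of $f$. Then $V := \ok \otimes_R K = K[x]/(f)$ is a finite product of separable field extensions of $K$, and both $M_A \otimes_R K$ and $M_B \otimes_R K$ are cyclic $V$-modules of $K$-dimension $\deg f$, hence each is isomorphic to $V$ itself. This lets me view $M_A$ and $M_B$ as $\ok$-lattices in a common $K$-space $V$, with $\Hom_\ok(M_A, M_B) = \{\alpha \in V : \alpha M_A \subseteq M_B\}$ and the $\ok$-isomorphisms realized by those $\alpha$ with $\alpha M_A = M_B$. Applying $\Hom_\ok(M_A, -)$ to $0 \to M_B \to V \to V/M_B \to 0$ and noting that $\Ext^1_\ok(M_A, V) = \Ext^1_\ok(M_A, M_B) \otimes_R K$ vanishes (because $\Ext^1_\ok(M_A, M_B)$ is $R$-torsion), I would extract a four-term exact sequence
\[ 0 \to \Hom_\ok(M_A, M_B) \to V \to \Hom_\ok(M_A, V/M_B) \to \Ext^1_\ok(M_A, M_B) \to 0 \]
which encodes, via its rightmost term, the obstruction to upgrading a generic $V$-isomorphism into a genuine $\ok$-isomorphism of lattices.

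Finally, combining the two hypotheses: the given $\OK$-isomorphism produces an element $\beta \in \Hom_\ok(M_A, M_B) \otimes_R S$ realizing $\beta(M_A \otimes S) = M_B \otimes S$, which forces the corresponding obstruction class in $\Ext^1_\OK(M_A \otimes \OK, M_B \otimes \OK) \cong \Ext^1_\ok(M_A, M_B) \otimes_R S$ to vanish; faithful flatness of $S/R$ makes the natural map $N \to N \otimes_R S$ injective on every $R$-module $N$, so the obstruction class must already vanish in $\Ext^1_\ok(M_A, M_B)$, and chasing back through the four-term sequence yields $\alpha \in \Hom_\ok(M_A, M_B)$ with $\alpha M_A = M_B$, i.e.\ an $\ok$-isomorphism $M_A \cong M_B$. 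I expect the main obstacle to be the clean identification of the descent obstruction with the class in $\Ext^1_\ok(M_A, M_B)$ coming from the four-term sequence: the torsion-plus-faithful-flatness argument is mechanical once this identification is pinned down, but verifying that the right class is killed by the existence of $\beta$ requires careful bookkeeping between the lattice picture over $K$ and the resolution computing $\Ext^1$.
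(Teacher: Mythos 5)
Your reformulation of similarity as isomorphism of the modules $M_A$, $M_B$, the flat base change isomorphisms for $\Hom$ and $\Ext^1$, and the identification $\Hom_{\ok}(M_A,M_B)=\{\alpha\in V:\alpha M_A\subseteq M_B\}$ (valid here because $f$ is separable, so $M_A\otimes_RK\cong V$) are all correct, and the four-term exact sequence is a legitimate consequence of the torsion hypothesis. But there is a genuine gap exactly where you flag one, and I do not think it closes along the route you describe. First, no obstruction class is actually defined: the connecting map in your sequence sends an element of $\Hom_{\ok}(M_A,V/M_B)$ to $\Ext_{\ok}^1(M_A,M_B)$, so before speaking of ``the'' obstruction you must extract from the given isomorphism $\beta$, which lives over $S$, some datum defined over $R$ (a specific element of $\Hom_{\ok}(M_A,V/M_B)$, or an approximate isomorphism of $M_A$ with $M_B$). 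That extraction is not automatic and is where the paper does real work: it reduces modulo $\pi^t$, observes $\overline{M_A}^{\oplus[L:K]}\cong\overline{M_B}^{\oplus[L:K]}$ over the Artinian ring $\ok/\pi^t\ok$, and invokes Krull--Schmidt to cancel and obtain $\overline{M_A}\cong\overline{M_B}$; only then does one have an $R$-linear isomorphism $\eta$ that is $\ok$-linear modulo $\pi^t$, whose failure of $\ok$-linearity defines the class $g_\eta\in\Ext_{\ok}^1(M_A,M_B)$. Faithful flatness alone is not a substitute for this cancellation step.

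Second, and more seriously, the vanishing of a class in $\Ext_{\ok}^1(M_A,M_B)$ can only ever hand you an element of $\Hom_{\ok}(M_A,M_B)$, i.e.\ an $\ok$-homomorphism; nothing in the diagram chase forces $\alpha M_A=M_B$ rather than $\alpha M_A\subsetneq M_B$. Producing a nonzero homomorphism is never the issue; the entire difficulty of the lemma is to produce one with unit determinant, and your sketch has no mechanism controlling invertibility. The paper handles this by correcting $\eta$ by a term divisible by $\pi$: the torsion hypothesis gives $\pi^sg_\eta=\alpha_0^*\Gamma(h_\eta)$, one chooses $t=s+1$, and the corrected map $\eta-\pi h_\eta$ is $\ok$-linear while still satisfying $\det(\eta-\pi h_\eta)\equiv\det(\eta)\pmod{\pi}$, hence remains invertible. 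Some analogue of this ``correct by a multiple of $\pi$ so the determinant survives'' argument is indispensable and is absent from your proposal.
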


To facilitate our subsequent comments, it is necessary to reorganize the proof.

\begin{proof}
We just need to prove the 'only if' part. Let $t$ be any positive integer, write $\overline{\ok}:=\ok/\pi^t\ok$ and $\overline{\OK}:=\OK/\pi^t\OK$, then $\overline{M_A}:=M_A/\pi^tM_A$ is an $\overline{\ok}$-module and
\[\overline{M_A\otimes_{\ok}\OK}:=M_A\otimes_{\ok}\OK/\pi^t(M_A\otimes_{\ok}\OK)=\overline{M_A}\otimes_{\overline{\ok}}\overline{\OK}\]
is an $\overline{\OK}$-module since $\OK$ is a flat $\ok$-module. Suppose we have an $\OK$-module isomorphism $M_A\otimes_{\ok}\OK\cong M_B\otimes_{\ok}\OK$, then
\[\overline{M_A\otimes_{\ok}\OK}\cong\overline{M_B\otimes_{\ok}\OK}\]
as $\overline{\OK}$-modules. But as $\overline{\ok}$-modules,
\[\overline{M_A\otimes_{\ok}\OK}\cong\overline{M_A}^{\oplus[L:K]}\quad\text{and}\quad\overline{M_B\otimes_{\ok}\OK}\cong\overline{M_B}^{\oplus[L:K]}.\] Note that the ring $\overline{\ok}$ is Artinian, so $\overline{M_A}\cong\overline{M_B}$ as $\overline{\ok}$-modules by the Krull-Schmidt theorem.

Now we show, there exists suitable $t$, such that $\overline{M_A}\cong\overline{M_B}$ implies $M_A\cong M_B$ (as $\ok$-modules). As $R$-modules, the free modules $M_A$ and $M_B$ are always isomorphic, so the isomorphism of $\overline{\ok}$-modules $\overline{M_A}\cong\overline{M_B}$ must come from some $R$-module isomorphism $\eta:M_A\to M_B$. We see this $\eta$ satisfies
\[(\eta(am)-a\eta(m))\in\pi^tM_B,\quad\text{for all }a\in\ok,m\in M_A.\]
Recall that the adjunction pairing $(\otimes,\Hom)$ provides a natural isomorphism
\[\Gamma:\Hom_R(M_A,M_B)=\Hom_R(M_A,\Hom_{\ok}(\ok,M_B))\overset{\sim}{\longrightarrow}\Hom_{\ok}(M_A\otimes_R\ok,M_B),\]
given by $\varphi\longmapsto\big[\Gamma(\varphi):m\otimes a\mapsto a\varphi(m)\big]$, where we view $\ok$ as an $(R,\ok)$-bimodule. Select the following projective resolution of $\ok$-module $M_A$:
\[\cdots\longrightarrow K_2\otimes_R\ok\overset{\alpha_1}{\longrightarrow}K_1\otimes_R\ok\overset{\alpha_0}{\longrightarrow}M_A\otimes_R\ok\longrightarrow M_A\longrightarrow0,\]
where
\[0\to K_2\to K_1\otimes_R\ok\to K_1\to0\quad\text{and}\quad0\to K_1\to M_A\otimes_R\ok\to M_A\to0.\]
Applying the functor $\Hom_{\ok}(\square,M_B)$ to the above long exact sequence, we obtain a complex
\[\xymatrix@C=0.5cm{\Hom_{\ok}(M_A,M_B)\ar@{^(->}[r]^{i~}&\Hom_{\ok}(M_A\otimes_R\ok,M_B)\ar[r]^{\alpha_0^*}&\Hom_{\ok}(K_1\otimes_R\ok,M_B)\ar[r]^{\alpha_1^*}&\Hom_{\ok}(K_2\otimes_R\ok,M_B)}\]
Consider $\Gamma(\eta)\in\Hom_{\ok}(M_A\otimes_R\ok,M_B)$, it is easy to verify $\alpha_0^*\Gamma(\eta)\equiv0$ as a map $K_1\otimes_R\ok\to M_B/\pi^tM_B$, hence $\alpha_0^*\Gamma(\eta)=\pi^tg_{\eta}$ for some $g_{\eta}\in\Hom_{\ok}(K_1\otimes_R\ok,M_B)$, and $0=\alpha_1^*\alpha_0^*\Gamma(\eta)=\pi^t\alpha_1^*(g_{\eta})$ implies $\alpha_1^*(g_{\eta})=0$, which means
\[g_{\eta}\in\Ext_{\ok}^1(M_A,M_B).\]
Since $\Ext_{\ok}^1(M_A,M_B)$ is a torsion $R$-module, there exists an integer $s$ such that $\pi^sg_{\eta}=\alpha_0^*\Gamma(h_{\eta})$ for some $h_{\eta}\in\Hom_R(M_A,M_B)$. Thus, $\alpha_0^*\Gamma(\eta-\pi h_{\eta})=\pi^tg_{\eta}-\pi^{s+1}g_{\eta}=0$ if we choose $t=s+1$, so there exists
\[F_{\eta}\in\Hom_{\ok}(M_A,M_B)\]
such that $i(F_{\eta})=\Gamma(\eta-\pi h_{\eta})$. But $\eta-\pi h_{\eta}\in\Hom_R(M_A,M_B)$ is an isomorphism because $\det(\eta-\pi h_{\eta})\equiv\det(\eta)(\mo~\pi)$ and $\det(\eta)\in R^{\times}$, so $F_{\eta}$ is an $\ok$-isomorphism, which implies $A$ and $B$ are similar over $R$.
\end{proof}

\begin{lemma}\label{5}
If $f$ is separable, then $\Ext_{\ok}^1(M_A,M_B)$ is a torsion $R$-module.
\end{lemma}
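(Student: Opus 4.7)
The plan is to prove the stronger statement $\Ext_{\ok}^1(M_A, M_B) \otimes_R K = 0$. Since $R$ is a DVR with field of fractions $K$, an $R$-module vanishes after tensoring with $K$ precisely when it is torsion, so this implies the lemma.

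To carry this out, I would first observe that $\ok = R[x]/(f)$ is a finitely generated free $R$-module (as $f$ is monic), hence Noetherian, and $M_A \cong R^n$ is a finitely generated, and thus finitely presented, $\ok$-module. Choose a resolution $P_\bullet \to M_A$ by finitely generated free $\ok$-modules. Flatness of $K$ over $R$ together with finite generation of each $P_i$ yields natural isomorphisms
\[
\Hom_{\ok}(P_i, M_B) \otimes_R K \;\cong\; \Hom_{K[x]/(f)}\bigl(P_i \otimes_R K,\, M_B \otimes_R K\bigr),
\]
and since $P_\bullet \otimes_R K$ remains a free resolution of $M_A \otimes_R K$ over $\ok \otimes_R K = K[x]/(f)$, taking cohomology gives
\[
\Ext_{\ok}^1(M_A, M_B) \otimes_R K \;\cong\; \Ext_{K[x]/(f)}^1\bigl(M_A \otimes_R K,\, M_B \otimes_R K\bigr).
\]

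Finally, separability of $f$ means $(f,f') = 1$ in $K[x]$, so $f$ has no repeated irreducible factors over $K$. By the Chinese Remainder Theorem, $K[x]/(f) \cong \prod_i K[x]/(f_i)$ is a finite product of fields, hence semisimple; over a semisimple ring every module is projective and all higher $\Ext$ groups vanish. Thus the right-hand side above is zero, completing the proof. The main obstacle is the flat-base-change identification for $\Ext$; while routine, it relies crucially on the finite presentation of $M_A$ over the Noetherian ring $\ok$ and the flatness of $K$ over $R$, and is the only step that is not purely formal.
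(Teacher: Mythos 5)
Your proof is correct and follows essentially the same route as the paper: identify $\Ext_{\ok}^1(M_A,M_B)\otimes_R K$ with $\Ext_{K[x]/(f)}^1(M_A\otimes_R K, M_B\otimes_R K)$ via flat base change, then conclude it vanishes because $(f,f')=1$ makes $K[x]/(f)$ semisimple. The paper merely states the base-change isomorphism by citing flatness of $\ok\to K[x]/(f)$, whereas you justify it via a finite free resolution; this is the same argument with the routine step written out.
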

\begin{proof}
Since the homomorphism $\ok=R[x]/(f)\to K[x]/(f)$ is flat, we have
\[\Ext_{\ok}^1(M_A,M_B)\otimes_RK\cong\Ext_{\ok}^1(M_A,M_B)\otimes_{\ok}K[x]/(f)\cong\Ext_{K[x]/(f)}^1(M_A\otimes_RK,M_B\otimes_RK).\]
This is vanish, because $K[x]/(f)$ is a semi-simple ring.
\end{proof}

So the condition in Lemma \ref{4} is always holds if $f$ is separable. At this point, we can partially prove Theorem \ref{3}:

\begin{theorem}\label{6}
Conjecture \ref{1} is true if the characteristic polynomial is separable.
\end{theorem}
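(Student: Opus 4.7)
My plan is to deduce Theorem~\ref{6} directly from Lemmas~\ref{4} and~\ref{5}. Assuming $A$ and $B$ are similar over $S$, they share a common characteristic polynomial $f\in R[x]$, and by hypothesis $f$ is separable. Lemma~\ref{5} then delivers the torsion hypothesis on $\Ext_{\ok}^1(M_A,M_B)$ required by Lemma~\ref{4}, so the whole argument reduces to translating the given $S$-similarity into the $\OK$-module isomorphism $M_A\otimes_{\ok}\OK\cong M_B\otimes_{\ok}\OK$ demanded by Lemma~\ref{4}.

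For this translation I would first identify $\OK=S[x]/(f)$ with $\ok\otimes_R S$, so that $M_A\otimes_{\ok}\OK\cong M_A\otimes_R S$ is naturally $S^{\oplus n}$ equipped with the $\OK$-action coming from $A$ regarded inside $\Mat_n(S)$; the same identification applies to $B$. A matrix $U\in\GL_n(S)$ satisfying $UAU^{-1}=B$ then defines an $S$-linear automorphism of $S^{\oplus n}$ which intertwines the two $x$-actions, hence an $\OK$-module isomorphism between $M_A\otimes_{\ok}\OK$ and $M_B\otimes_{\ok}\OK$. Feeding this into Lemma~\ref{4} produces the required similarity of $A$ and $B$ over $R$, and the theorem follows.

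There is no genuine obstacle here: the substance has already been carried by Lemmas~\ref{4} and~\ref{5}, and what remains is the routine bookkeeping sketched above --- namely that the base-change functor $(-)\otimes_{\ok}\OK$ sends the $\ok$-module $M_A$ to the $\OK$-module attached to $A$ viewed over $S$, and that a conjugating matrix over $S$ furnishes the desired intertwiner. The only place where one should be slightly careful is in matching conventions for the $\ok$-action (the paper uses a right-multiplication convention via transposes), but this affects neither the identification of base changes nor the construction of the intertwiner from $U$.
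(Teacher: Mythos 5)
Your proposal is correct and follows exactly the paper's route: the paper's own proof simply states that the theorem is a direct consequence of Lemmas~\ref{4} and~\ref{5}. The extra bookkeeping you supply (identifying $M_A\otimes_{\ok}\OK$ with $S^{\oplus n}$ carrying the $x$-action via $A$ over $S$, and turning a conjugating matrix $U\in\GL_n(S)$ into the required $\OK$-isomorphism) is the translation the paper leaves implicit, and it is carried out correctly.
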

\begin{proof}
Given two matrices in $\Mat_n(R)$ with the same separable characteristic polynomial, we hope they are similar over $S$ implies they are similar over $R$. But this is a direct conclusion of Lemma \ref{4} and Lemma \ref{5}.
\end{proof}

However, if $f$ is inseparable, the extension module $\Ext_{\ok}^1(M_A,M_B)$ is not necessarily a torsion $R$-module (the condition in \cite{rz} is stronger than ours, it may certainly also fail).

\begin{example}\label{7}
Let $A=\left(\begin{array}{cc}0&1\\0&0\end{array}\right)$ over some DVR $R$ with characteristic polynomial $f(x)=x^2$, then $\ok=R[x]/(x^2)$. One can show as an $\ok$-module, $M_A\cong(x)/(x^2)$ is not projective. But
\[\Ext_{\ok}^1(M_A,M_A)\otimes_RK\cong\Ext_{K[x]/(x^2)}^1(M_A\otimes_RK,M_A\otimes_RK)\]
has a free rank generated by the short exact sequence
\[0\to(x)/(x^2)\to K[x]/(x^2)\to(x)/(x^2)\to0.\]
\end{example}

Therefore, merely using the results above cannot handle the inseparable situation, the conjecture remains an open problem. In section \S\ref{s4}, as a computable case, we focus on $2\times2$ matrices and prove the corresponding conjecture even if $f$ is inseparable.

\section{Similarity Classes and Ideal Classes}\label{s3}

In \cite{lm} (see also \cite[Theorem III.13]{new}), the similarity of $n\times n$ matrices over $\Z$ with a given characteristic polynomial without multiple roots has been studied. As pointed out in \cite[Lemma 6.4]{wx}, such a result is true over any PID.

\begin{proposition}\label{8}
Let $R$ be a PID with the fractional field $K$. Let $f\in R[x]$ be a monic irreducible polynomial and $\theta$ be a root of $f$ (this means that $\theta$ is the class of $x$ in $R[x]/(f)$), then there exists a bijection
\[\{A:A\text{ is a similarity class in }\Mat_n(R)\text{ with }f(A)=0\}\longrightarrow\{\text{ideal classes in }R[\theta]\},\]
given by $A\longmapsto R\langle X\rangle$, where the column vector $X\in(R[\theta])^n$ satisfies $AX=\theta X$.
\end{proposition}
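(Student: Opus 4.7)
The plan is to identify each similarity class with the isomorphism class of a torsion-free $R[\theta]$-module of rank one, and then to realise such modules as ideals of $R[\theta]$. Since $R$ is a PID, Gauss's lemma forces $f$ to remain irreducible over $K$, so $K[\theta]=K[x]/(f)$ is a field and $R[\theta]$ is a domain with fraction field $K[\theta]$. Given $A$ with $f(A)=0$, I endow $R^n$ with an $R[\theta]$-module structure $M_A$ by letting $\theta$ act as $A$; then $M_A\otimes_R K$ is a $K[\theta]$-vector space of $K$-dimension $n=[K[\theta]:K]$, hence of $K[\theta]$-dimension $1$. Because $\det(\theta I_n-A)=f(\theta)=0$ in $K[\theta]$, there is a nonzero $X\in K[\theta]^n$ with $AX=\theta X$, and after clearing denominators I may take $X\in R[\theta]^n$. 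The equation $AX=\theta X$ expresses each $\theta x_i$ as an $R$-linear combination of the components of $X$, so $J:=R\langle X\rangle$ is closed under multiplication by $\theta$ and is therefore an ideal of $R[\theta]$.

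I next check that the rule $A\mapsto[J]$ is well-defined on similarity classes. Because $f$ has no multiple roots, $\theta$ is a simple root of the characteristic polynomial of $A$ computed over $K[\theta]$, so the $\theta$-eigenspace of $A$ in $K[\theta]^n$ is one-dimensional over $K[\theta]$. Any two eigenvectors $X,X'$ then differ by an element $\alpha\in K[\theta]^\times$, giving $R\langle X'\rangle=\alpha\cdot R\langle X\rangle$ and hence the same ideal class. Replacing $A$ by $UAU^{-1}$ with $U\in\GL_n(R)$ replaces $X$ by $UX$, and $R\langle UX\rangle=R\langle X\rangle$ because $U$ preserves the $R$-span of the components.

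For the inverse direction, I take an ideal $J\subseteq R[\theta]$; being a finitely generated torsion-free module over the PID $R$, it is $R$-free, and its rank equals $n$ because $J\otimes_R K$ is a nonzero $K[\theta]$-subspace of $K[\theta]$ and therefore equals $K[\theta]$, which has $K$-dimension $n$. I assemble an $R$-basis of $J$ into a column $X$; the inclusion $\theta J\subseteq J$ yields a unique $A\in\Mat_n(R)$ with $AX=\theta X$, and the identity $f(A)X=f(\theta)X=0$ together with the $R$-linear independence of the components of $X$ forces $f(A)=0$. A change of $R$-basis of $J$ replaces $A$ by a $\GL_n(R)$-conjugate; replacing $J$ by $\alpha J$ with $\alpha\in K[\theta]^\times$ replaces $X$ by $\alpha X$ and leaves $A$ unchanged, so the construction descends to ideal classes. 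Tracking $X$ through both constructions shows that they are mutually inverse.

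The main technical input is the identification of $M_A\otimes_R K$ with $K[\theta]$ (for the very existence of an ideal representative) together with the one-dimensionality of the $\theta$-eigenspace of $A$ in $K[\theta]^n$ (for the forward map to be well-defined on similarity classes); both rely on $f$ being irreducible with no multiple roots, the standing hypothesis in the Latimer--MacDuffee setup. Once this is in hand, the remaining steps amount to routine bookkeeping between $R$-bases of ideals and matrix representations.
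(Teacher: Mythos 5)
Your argument is the classical eigenvector proof of Latimer--MacDuffee, and its overall architecture is sound (the paper does not actually prove Proposition~\ref{8} --- it quotes \cite{lm} and \cite[Lemma 6.4]{wx} --- and its own generalization, Theorem~\ref{2}, uses a transposed row-vector version of the same construction under the extra hypothesis $(f,f')=1$). However, one step does not hold in the stated generality. To show the forward map is well defined you assert that the $\theta$-eigenspace of $A$ in $K[\theta]^n$ is one-dimensional ``because $f$ has no multiple roots,'' calling this a standing hypothesis. It is not: Proposition~\ref{8} assumes only that $f$ is monic irreducible over the PID $R$, and in positive characteristic an irreducible polynomial over a PID can be inseparable (e.g.\ $x^p-t$ over $\mathbb{F}_p[t]$, where $f'=0$ and $\theta$ is a root of multiplicity $p$ over $K[\theta]$). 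The paper genuinely needs this generality: Lemma~\ref{15}, whose proof rests on Proposition~\ref{8}, is applied in \S\ref{s4.4} to inseparable quadratics in characteristic $2$. The conclusion you want is still true, but for a different reason: since $f$ is irreducible of degree $n$ and $f(A)=0$, the minimal polynomial of $A$ equals $f$ and hence equals the characteristic polynomial, so $A$ is non-derogatory; for the companion matrix the rank of $\lambda I_n-A$ is at least $n-1$ over any extension field, so every eigenspace is exactly one-dimensional. Replacing your ``simple root'' justification by this one closes the gap.

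A second, smaller omission concerns the claim that the two constructions are mutually inverse. The inverse construction starts from an $R$-\emph{basis} of $J$, so to recover $A$ from $J=R\langle X\rangle$ you need the components $x_1,\dots,x_n$ of your integral eigenvector to be an $R$-basis of $J$, not merely a generating set. This is true but requires a line of argument: $J$ is a nonzero ideal of the domain $R[\theta]$, hence contains $x_iR[\theta]\cong R[\theta]\cong R^{\oplus n}$ for any nonzero component $x_i$, so $J$ has $R$-rank exactly $n$; and $n$ generators of a free rank-$n$ module over a PID are automatically a basis (the kernel of the presenting surjection is torsion-free of rank zero). With these two repairs your proof is complete and covers exactly the generality the paper requires.
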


Given an ideal class $R\langle X\rangle=R\langle x_1,\cdots,x_n\rangle$, the map in Proposition \ref{8} is invertible because $(\theta x_1,\cdots,\theta x_n)\in(R\langle X\rangle)^n$, and there must exist a matrix $A\in\Mat_n(R)$ such that $A(x_1,\cdots,x_n)^T=(\theta x_1,\cdots,\theta x_n)^T$.

In this section, we explain how to extend this result to an integral domain. The following lemma is well-known. For completeness, we provide a proof as well.

\begin{lemma}\label{9}
Let $R$ be an integral domain with the fractional field $K$. If $f\in R[x]$ is a monic polynomial, then the natural map
\[R[x]/(f)\hookrightarrow K[x]/(f)\]
is injective. Moreover, assuming that $(f,f')=1$ in $K[x]$ and $\alpha \in R[x]/(f)$, then $\alpha$ is not a zero divisor of $R[x]/(f)$ if and only if $\alpha$ is invertible in $K[x]/(f)$.
\end{lemma}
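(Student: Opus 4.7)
My plan is to handle the two assertions separately, with polynomial division as the main engine for the first and Chinese Remainder Theorem (CRT) plus a denominator‐clearing trick for the second.

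For the injectivity of $R[x]/(f)\hookrightarrow K[x]/(f)$, I would take $g\in R[x]$ that maps to $0$ in $K[x]/(f)$, so $g=fh$ for some $h\in K[x]$, and use the fact that $f$ is monic to perform the Euclidean division of $g$ by $f$ inside $R[x]$: write $g=fq+r$ with $q,r\in R[x]$ and $\deg r<\deg f$. Passing to $K[x]$ and comparing with $g=fh$, the uniqueness of the remainder in $K[x]$ forces $r=0$ and $q=h$, so $g\in(f)$ already in $R[x]$. This is essentially the only place the monicity of $f$ is used.

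For the equivalence, the easy direction is straightforward: if $\alpha$ is invertible in $K[x]/(f)$ and $\alpha\beta=0$ in $R[x]/(f)$, then the injectivity just proved transports the equation to $K[x]/(f)$, where multiplying by $\alpha^{-1}$ gives $\beta=0$. So $\alpha$ is not a zero divisor in $R[x]/(f)$.

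The harder direction—showing that a non-zero-divisor $\alpha\in R[x]/(f)$ is automatically a unit in $K[x]/(f)$—is where the hypothesis $(f,f')=1$ comes in, and this will be the main obstacle. The idea is that separability of $f$ over $K$ means $f=f_1\cdots f_r$ is a product of distinct monic irreducibles in $K[x]$, so by CRT
\[K[x]/(f)\;\cong\;\prod_{i=1}^{r}K[x]/(f_i)\]
is a finite product of fields. In such a ring every element is either a unit or a zero divisor. Hence it suffices to rule out that $\alpha$ is a zero divisor in $K[x]/(f)$. Assuming it were, I would pick $\beta\in K[x]/(f)$ nonzero with $\alpha\beta=0$, choose a representative of $\beta$ of degree less than $\deg f$ with coefficients in $K$, clear a common denominator $d\in R\setminus\{0\}$ to obtain $d\beta\in R[x]/(f)$ which is still nonzero (by the injectivity of the first part) and still satisfies $\alpha\cdot(d\beta)=0$. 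This contradicts the hypothesis that $\alpha$ is not a zero divisor in $R[x]/(f)$, completing the proof.
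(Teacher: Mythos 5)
Your proposal is correct and follows essentially the same route as the paper: the second assertion is handled identically (scale a nonzero element of $K[x]/(f)$ by a nonzero $d\in R$ to land in $R[x]/(f)$, and use that $(f,f')=1$ makes $K[x]/(f)$ a product of fields, where every element is a unit or a zero divisor). For the injectivity you appeal to uniqueness of quotient and remainder under monic division, whereas the paper runs an explicit top-down induction on the coefficients of the quotient; these are the same monicity argument in different clothing.
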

\begin{proof}
Let $h\in R[x]$ satisfying $h\in(f)$ over $K[x]$. Then there is $g\in K[x]$ such that $h=fg$. Since $f$ is a monic polynomial over $R$, one can write
\[f(x)=\sum_{i=0}^na_ix^i\quad\text{and}\quad g(x)=\sum_{i=0}^mb_ix^i,\]
where $a_n=1$, $a_{n-1},\cdots,a_0\in R$ and $b_m,\cdots,b_0\in K$. By comparing the coefficients of $x^l$ in $h=fg$, one obtains
\[\sum_{i=0}^la_ib_{l-i}\in R,\quad\text{for }0\leq l\leq m+n,\]
where $a_i=b_j=0$ for $i>n$ and $j>m$.

When $l=m+n$, one obtains $a_nb_m=b_m\in R$. Suppose $b_m,\cdots,b_k\in R$. Take $l=n+k-1$, one obtains
\[a_nb_{k-1}+\sum_{j=1}^na_{n-j}b_{j+k-1}=b_{k-1}+\sum_{j=1}^na_{n-j}b_{j+k-1}\in R.\]
This implies that $b_{k-1}\in R$. By induction, one concludes $g\in R[x]$. Therefore the natural map is injective.

For any non-zero element $\beta\in K[x]/(f)$, there is a non-zero element $\xi\in R$ such that $\xi\beta\in R[x]/(f)$ and $\xi\beta\neq0$ in $R[x]/(f)$. If $\alpha$ is not a zero divisor of $R[x]/(f)$, then $\alpha$ is not a zero divisor of $K[x]/(f)$ by the natural injection. Since $K[x]/(f)$ is a direct sum of fields by $(f,f')=1$, one concludes that $\alpha$ is invertible in $K[x]/(f)$. Conversely, if $\alpha$ is invertible in $K[x]/(f)$, then any non-zero element $\xi\in R[x]/(f)$ with $\alpha\xi=0$ implies that $\xi=0$ by the natural injection. Namely, $\alpha$ is not a zero divisor of $R[x]/(f)$.
\end{proof}

\begin{lemma}\label{10}
Let $R$ be an integral domain and $f\in R[x]$ be a polynomial. If $M$ is an $R$-module in $R[x]/(f)$ and $\alpha$ is not a zero divisor of $R[x]/(f)$, then the map
\[M\longrightarrow\alpha M,\quad x\longmapsto\alpha x\]
is an isomorphism of $R$-modules. In particular, $M$ is a free $R$-module if and only if $\alpha M$ is a free $R$-module.
\end{lemma}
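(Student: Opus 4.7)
The plan is to verify the two defining properties of an isomorphism for the multiplication-by-$\alpha$ map, and then use functoriality of freeness under $R$-module isomorphism to obtain the \emph{In particular} statement. Let me denote the proposed map by $\mu_\alpha \colon M \to \alpha M$, $m \mapsto \alpha m$.

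First, $\mu_\alpha$ is clearly well-defined: by definition $\alpha M = \{\alpha m : m \in M\}$ lies in $R[x]/(f)$ (using that $M \subseteq R[x]/(f)$ and that $R[x]/(f)$ is a ring in which $\alpha$ can be multiplied), and $\mu_\alpha$ lands in $\alpha M$ by construction. It is $R$-linear because multiplication by $\alpha$ commutes with scalar multiplication by $R$ inside the ring $R[x]/(f)$. Surjectivity is immediate from the very definition of $\alpha M$.

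The one point that requires the hypothesis on $\alpha$ is injectivity, and this is the only substantive step. Suppose $\mu_\alpha(m) = \alpha m = 0$ for some $m \in M$. Viewing this equation inside $R[x]/(f)$, since $\alpha$ is not a zero divisor there, we conclude $m = 0$. So $\mu_\alpha$ is injective. This establishes that $\mu_\alpha$ is an $R$-module isomorphism.

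For the last sentence, I note that freeness of an $R$-module is preserved by $R$-module isomorphism: if $M \cong \alpha M$, then a free $R$-basis of one transports to a free $R$-basis of the other. Thus $M$ is free if and only if $\alpha M$ is free, and this concludes the proof. There is no real obstacle here; the only genuine content is the use of Lemma \ref{9}-style non-zero-divisor behaviour inside $R[x]/(f)$, already built into the hypothesis on $\alpha$.
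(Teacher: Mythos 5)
Your proof is correct and follows the same route as the paper: the only substantive point is that the kernel of multiplication by $\alpha$ is trivial because $\alpha$ is not a zero divisor, which is exactly the paper's argument. Your additional remarks on well-definedness, surjectivity, and transport of bases are routine elaborations of what the paper leaves implicit.
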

\begin{proof}
Clearly, this map is an $R$-module homomorphism. Its kernel is $\{x\in M:\alpha x=0\}=0$, since $\alpha$ is not a zero divisor.
\end{proof}

\begin{definition}\label{11}
Let $f$ be a polynomial over an integral domain $R$. For two ideals $J_1$ and $J_2$ of $R[x]/(f)$, we say that $J_1\sim J_2$ if there are $\alpha_1$ and $\alpha_2$ in $R[x]/(f)$ which are not zero divisors such that $\alpha_1J_1=\alpha_2J_2$.
\end{definition}

It is clear that '$\sim$' is an equivalent relation among the set of all ideals of $R[x]/(f)$. If the degree of $f$ is $n$, then the subset
\[\{J\text{ ideal of }R[x]/(f):J\text{ is a free $R$-module of rank $n$}\}\]
induces the subset of equivalent classes of all ideals of $R[x]/(f)$ by Lemma \ref{10}.

Let us prove Theorem \ref{2} now.

\begin{proof}
Let $K$ be the fractional field of $R$ and $f(x)=x^n+a_1x^{n-1}+\cdots+a_n$ and fix
\[A_0=\left(\begin{array}{ccccc}0&0&\cdots&0&-a_n\\1&0&\cdots&0&-a_{n-1}\\\vdots&\vdots&\ddots&\vdots&\vdots\\0&0&\cdots&1&-a_1\end{array}\right)\in\Mat_n(R).\]

For any matrix $A\in\Mat_n(R)$ with the characteristic polynomial $f$, there is $g\in\GL_n(K)$ such that $A_0=gAg^{-1}$ by $(f,f')=1$. Choose $a\in R\setminus \{0\}$ such that $a\cdot g\in\Mat_n(R)$. Let
\[(u_1,u_2,\cdots,u_n)=(1,\overline{x},\cdots,\overline{x}^{n-1})\cdot a\cdot g\text{ and }J=Ru_1+\cdots+Ru_n.\]
Since \[\overline{x}\cdot(u_1,\cdots,u_n)=(1,\overline{x},\cdots,\overline{x}^{n-1})A_0\cdot a\cdot g=(u_1,\cdots,u_n)A,\quad\quad(\ast)\]
one obtains that $J$ is an ideal of $R[x]/(f)$. Moreover, since $a\cdot g\in\GL_n(K)$, one obtains that $J$ is a free $R$-module with a basis $\{u_1,\cdots,u_n\}$.

Let $R[A]$ and $K[A]$ be $R$-algebra and $K$-algebra generated by $A$ respectively. Then one has the following commutative diagram
\[\xymatrix{R[x]/(f)\ar[d]_{\iota}\ar[r]^{\phi}&R[A]\ar[d]^{\kappa}\\K[x]/(f)\ar[r]^{\psi}&K[A]}\]
by sending $x$ to $A$. Since $(f,f')=1$, the map $\psi$ is an isomorphism. By Lemma \ref{9}, the map $\iota$ is injective. This implies that $\phi$ is an isomorphism and $\kappa$ is injective.

Suppose $h\in\GL_n(K)$ satisfies $A_0=hAh^{-1}$. Then $g^{-1}hA=Ag^{-1}h$. Since $(f,f')=1$, one obtains that $g^{-1}h\in K[A]$. Choose $b\in R\setminus\{0\}$ such that $b\cdot h\in\Mat_n(R)$ and set
\[(v_1,\cdots,v_n)=(1,\overline{x},\cdots,\overline{x}^{n-1})\cdot b\cdot h\text{ and }J_1=Rv_1+\cdots Rv_n.\]
Then
\[(u_1,\cdots,u_n)\cdot(g^{-1}h)=(v_1,\cdots,v_n)\cdot(ab^{-1}).\]
By $(\ast)$, one has
\[\psi^{-1}(g^{-1}h)\cdot(u_1,\cdots,u_n)=(u_1,\cdots,u_n)\cdot(g^{-1}h).\]
Let $c\in R\setminus\{0\}$ such that $c\cdot\psi^{-1}(g^{-1}h)\in R[x]/(f)$. Since $g^{-1}h\in\GL_n(K)$, one concludes that $bc\cdot\psi^{-1}(g^{-1}h)$ and $ac$ are invertible in $K[x]/(f)$. By Lemma \ref{9}, both $bc\cdot\psi^{-1}(g^{-1}h)$ and $ac$ are not zero divisors of $R[x]/(f)$ and
\[bc\cdot\psi^{-1}(g^{-1}h)\cdot J=ac\cdot J_1.\]
Therefore $J\sim J_1$. This implies that the map $A\mapsto J$ is well-defined.

Conversely, for any ideal $J$ of $R[x]/(f)$ which is a free $R$-module with rank $n$, one can choose a $R$-basis $(u_1,\cdots,u_n)$ of $J$ and obtain a matrix $A\in\Mat_n(R)$ by
\[\overline{x}\cdot(u_1,\cdots,u_n)=(u_1,\cdots,u_n)A.\]
If $J_1$ is an ideal of $R[x]/(f)$ such that $J_1\sim J$, then $J_1$ is also a free $R$-module of rank $n$ by Lemma \ref{10}. Let $\{v_1,\cdots,v_n\}$ be a basis of $J_1$ over $R$ and
\[\overline{x}\cdot(v_1,\cdots,v_n)=(v_1,\cdots,v_n)B.\]
Since $J\sim J_1$, there are not zero divisors $\xi,\eta\in R[x]/(f)$ such that $\xi J=\eta J_1$. This implies that there is $T\in\GL_n(R)$ such that
\[(\xi u_1,\cdots,\xi u_n)=(\eta v_1,\cdots,\eta v_n)T.\]
Thus $B=TAT^{-1}$. The map $J \mapsto A$ is well-defined and gives the opposite direction map.
\end{proof}

\section{Over Discrete Valuation Rings}\label{s4}
In this section, let $(R,\pi)$ be a DVR, with fractional field $K$ and residue field $\kappa:=R/(\pi)$. The valuation of $R$ is denoted by $v:K^{\times}\to\Z$.

In order to study concrete cases, we focus on $2\times2$ matrices. Let $A\in\Mat_2(R)$ be a matrix with characteristic polynomial $f(x)=x^2-ax-b\in R[x]$. We will discuss the similarity classes in $\Mat_2(R)$ in the following several cases.

If $f$ is reducible (see \S\ref{s4.1}), \cite[Theorem III.12]{new} (which initially established for $\Z$, however, it is straightforward to verify this also hold for PIDs) almost directly resolves the problem:
\begin{proposition}\label{12}
Let $R$ be a PID with the fractional field $K$. Suppose $A\in\Mat_n(R)$, then $A$ lies in the similarity class represented by the following matrix
\[\left(\begin{array}{cccc}A_{11}&A_{12}&\cdots&A_{1r}\\
 0&A_{22}&\cdots&A_{2r}\\
 \vdots&\vdots&\ddots&\vdots\\
 0&0&\cdots&A_{rr}\end{array}\right)\]
where $A_{ii}$ are matrices with coefficients in $R$ and their characteristic polynomial $\det(xI-A_{ii})$ are irreducible in $K[x]$.
\end{proposition}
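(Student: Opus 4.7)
The plan is to induct on $n$ by splitting off a single irreducible factor of the characteristic polynomial as the top-left block at each step. The base case $n=1$ is vacuous (any $1\times 1$ matrix already has the stated form). For the inductive step, I would factor the characteristic polynomial $f(x)=\det(xI_n-A)$ into monic irreducibles over $K[x]$; since $R$ is a UFD and $f$ is monic, Gauss's lemma ensures every irreducible factor lies in $R[x]$. Fix such an irreducible factor $p_1\in R[x]$ of degree $d$.

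Next I would produce a nonzero vector $v\in K^n$ with $p_1(A)v=0$, which exists because $p_1$ divides the minimal polynomial of $A$ over $K$: writing $m=p_1q$ for the minimal polynomial $m$, any $w\in K^n$ with $q(A)w\neq 0$ yields $v:=q(A)w$. Since $p_1$ is irreducible and annihilates $v\neq 0$, the cyclic $K[A]$-subspace $V:=K\langle v,Av,\dots,A^{d-1}v\rangle$ has $K$-dimension exactly $d$ and $A|_V$ has characteristic polynomial $p_1$.

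After scaling $v$ so that $v\in R^n$, I would set $W:=V\cap R^n$. As a submodule of a free module over the PID $R$, $W$ is free; its $K$-span equals $V$, so $W$ has rank $d$. Crucially, $W$ is saturated in $R^n$: if $\pi u\in W$ for $u\in R^n$ and $0\neq\pi\in R$, then $u\in V\cap R^n=W$, so the quotient $R^n/W$ is torsion-free, hence free of rank $n-d$. Consequently $W$ is an $R$-module direct summand of $R^n$, and I can choose an $R$-basis of $R^n$ whose first $d$ vectors span $W$. Since $W$ is $A$-invariant, in this basis $A$ takes the block upper triangular form $\left(\begin{smallmatrix}A_{11}&B\\0&A_{22}\end{smallmatrix}\right)$, with $A_{11}\in\Mat_d(R)$ representing $A|_W$ and hence having irreducible characteristic polynomial $p_1$. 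Applying the induction hypothesis to $A_{22}\in\Mat_{n-d}(R)$ and assembling the resulting block decomposition of $A_{22}$ with $A_{11}$ completes the proof.

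The step I expect to be the main obstacle is the verification that $W=V\cap R^n$ has rank exactly $d$ with $R^n/W$ free; this is where both PID hypotheses are used (submodules of free modules are free, torsion-free modules are free). Everything else reduces to linear algebra over $K$ and to Gauss's lemma, which require only that $R$ be a UFD with fraction field $K$.
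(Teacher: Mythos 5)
Your argument is correct and complete: splitting off one irreducible factor via a cyclic $K[A]$-subspace, saturating it to the $A$-invariant summand $W=V\cap R^n$ (free with free quotient since $R$ is a PID), and inducting on the complementary block is exactly the standard proof of this statement. The paper itself gives no proof, citing \cite[Theorem III.12]{new} instead, and your write-up matches the argument behind that citation, correctly isolating the only point where the PID hypothesis enters (freeness of $W$ and of the torsion-free quotient $R^n/W$).
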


If $f$ is irreducible, to use the method of completing the square (this requires $\mathrm{char}(K)\neq2$), we need to discuss two cases based on whether $2\in R$ is a unit (see \S\ref{s4.2} and \S\ref{s4.3}). According to Proposition \ref{8} or Theorem \ref{2}, classifying matrices over $R$ is equivalent to classifying ideal classes in the order $\OK=R[\theta]$, where $\theta$ is a root of $f$. Let us review the definition here.

\begin{definition}\label{13}
Let $R$ be an integral domain with fractional field $K$, an order $\OK$ in a finite dimensional $K$-algebra $A$ is a subring $\OK\subseteq A$ which is a finite $R$-module with the property that $\OK\otimes_RK=A$.
\end{definition}

Recall that two ideals $J_1,J_2$ in $\OK$ are equivalent if $\alpha_1J_1=\alpha_2J_2$ for some non-zero $\alpha_1,\alpha_2\in\OK$. Therefore, one can define a set
\[\Cl(\OK):=\{\text{ideal classes in }\OK\}.\]
An important problem in number theory is determining whether $\Cl(\OK)$ is finite. If so, the cardinality of this set is called the class number of $\OK$.

When $\mathrm{char}(K)=2$, it is necessary to discuss whether $f$ is separable, since we focus on $2\times2$ matrices. The separable case can still employ the techniques developed for $\mathrm{char}(K)\neq2$ (Lemma \ref{15}); but when $f$ is inseparable, it degenerates to $f=x^2-b$, and we need to classify matrices with such characteristic polynomials (see \S\ref{s4.4}).
\subsection{$f$ is Reducible}\label{s4.1}
In this case, $f$ splits into two linear factors.

\begin{proposition}\label{14}
All the similarity classes in
\[\Big\{A\in\Mat_2(R):\text{ characteristic polynomial }f=(x-\lambda_1)(x-\lambda_2)\text{ of }A\text{ is reducible}\Big\}\]
are represented by matrices
\[\left(\begin{array}{cc}\lambda_1&\tau\\0&\lambda_2\end{array}\right),\]
where $v(\lambda_1)\geq v(\lambda_2)\geq0$ and $\tau\in\left\{\pi^0,\pi^1,\cdots,\pi^{v(\lambda_1-\lambda_2)}\right\}$.
\end{proposition}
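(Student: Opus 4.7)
The plan is to combine Proposition~\ref{12} (upper-triangular reduction over a PID) with an explicit calculation of $\GL_2(R)$-orbits inside the set of upper-triangular matrices with prescribed diagonal.

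First I would reduce to upper-triangular form with the correct ordering. Since $R$ is integrally closed and $f=(x-\lambda_1)(x-\lambda_2)$ is monic, both roots $\lambda_i$ lie in $R$, and the only irreducible factors of $f$ over $K$ are linear; Proposition~\ref{12} thus gives some $U_0\in\GL_2(R)$ with $U_0AU_0^{-1}=\begin{pmatrix}\mu_1&c\\0&\mu_2\end{pmatrix}$ for some $c\in R$ and $\{\mu_1,\mu_2\}=\{\lambda_1,\lambda_2\}$. Relabeling so that $v(\lambda_1)\ge v(\lambda_2)$, I still need to handle the case $\mu_1=\lambda_2$. I would split according to whether $v(c)\le N:=v(\lambda_1-\lambda_2)$: if so, a direct computation shows that conjugation by $\begin{pmatrix}1&0\\-(\lambda_1-\lambda_2)/c&1\end{pmatrix}\in\GL_2(R)$ sends $\begin{pmatrix}\lambda_2&c\\0&\lambda_1\end{pmatrix}$ to $\begin{pmatrix}\lambda_1&c\\0&\lambda_2\end{pmatrix}$; otherwise $c\in(\lambda_1-\lambda_2)R$, so an upper-unipotent conjugation first clears $c$ and then the swap matrix $\begin{pmatrix}0&1\\1&0\end{pmatrix}$ exchanges the diagonal.

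Next I would normalize the upper-right entry. With $A$ of the form $\begin{pmatrix}\lambda_1&c\\0&\lambda_2\end{pmatrix}$, two families of conjugations preserve this shape: the diagonal $\begin{pmatrix}u&0\\0&1\end{pmatrix}$ with $u\in R^\times$ sends $c\mapsto uc$, and the upper-unipotent $\begin{pmatrix}1&t\\0&1\end{pmatrix}$ with $t\in R$ sends $c\mapsto c+(\lambda_2-\lambda_1)t$. Combined, $c$ may be replaced by any element of its $R^\times$-orbit in the quotient $R/\pi^NR$. Each such orbit has a unique representative of the form $\pi^k$ with $k\in\{0,1,\ldots,N-1\}$, together with the singleton orbit $\{0\}$, which I would relabel as $\pi^N$ (since $\pi^N\in(\lambda_1-\lambda_2)R$, this matches the $c=0$ class); this yields the claimed list of representatives.

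Finally I would verify pairwise distinctness. If $U=\begin{pmatrix}a&b\\e&d\end{pmatrix}\in\GL_2(R)$ intertwines $\begin{pmatrix}\lambda_1&\pi^k\\0&\lambda_2\end{pmatrix}$ with $\begin{pmatrix}\lambda_1&\pi^\ell\\0&\lambda_2\end{pmatrix}$, expanding the intertwining equation entrywise yields $(\lambda_1-\lambda_2)e=0$ from the $(2,1)$-entry, $\pi^ke=0$ from the $(1,1)$-entry, and $a\pi^\ell-d\pi^k\equiv 0\pmod{\pi^N}$ from the $(1,2)$-entry. When $\lambda_1\neq\lambda_2$ one obtains $e=0$ and hence $a,d\in R^\times$; the last relation then reads $\pi^\ell\equiv(d/a)\pi^k\pmod{\pi^N}$, whose only solution with $k,\ell\in\{0,\ldots,N\}$ is $k=\ell$. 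The equal-root case is analogous but simpler. The main obstacle is the reordering step, since the intertwiner $\begin{pmatrix}1&0\\-(\lambda_1-\lambda_2)/c&1\end{pmatrix}$ only lies in $\GL_2(R)$ when $v(c)\le v(\lambda_1-\lambda_2)$; everything else is linear bookkeeping.
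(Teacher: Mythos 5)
Your proposal is correct and follows essentially the same route as the paper: reduce to upper-triangular form via Proposition~\ref{12}, reorder the diagonal, and then classify the off-diagonal entry by comparing its valuation with $v(\lambda_1-\lambda_2)$ through the entrywise intertwining equations. The only substantive addition is that you explicitly justify the diagonal swap (which the paper dismisses as evident) via the case split on whether $v(c)\leq v(\lambda_1-\lambda_2)$, and you package the paper's three-case analysis of the off-diagonal entry as the computation of $R^{\times}$-orbits on $R/\pi^{v(\lambda_1-\lambda_2)}R$.
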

\begin{proof}
By Proposition \ref{12}, we only need to study the case of upper triangular matrices. It is evident that
\[\left(\begin{array}{cc}\lambda_1&\tau\\0&\lambda_2\end{array}\right)\text{ and }\left(\begin{array}{cc}\lambda_2&\tau\\0&\lambda_1\end{array}\right)\]
are similar, so we can assume $v(\lambda_1)\geq v(\lambda_2)$. Now two matrices
\[\left(\begin{array}{cc}\lambda_1&\tau_1\\0&\lambda_2\end{array}\right)\text{ and }\left(\begin{array}{cc}\lambda_1&\tau_2\\0&\lambda_2\end{array}\right)\quad(\tau_1,\tau_2\neq0)\]
are similar if there exists an invertible matrix $\left(\begin{array}{cc}x&y\\z&w\end{array}\right)\in\Mat_2(R)$ such that \[\left(\begin{array}{cc}x&y\\z&w\end{array}\right)\left(\begin{array}{cc}\lambda_1&\tau_1\\0&\lambda_2\end{array}\right)=\left(\begin{array}{cc}\lambda_1&\tau_2\\0&\lambda_2\end{array}\right)\left(\begin{array}{cc}x&y\\z&w\end{array}\right).\]
This implies $z=0$ and $(\lambda_1-\lambda_2)y=\tau_1x-\tau_2w$. We discuss it in the following cases:
\begin{itemize}
\item If $v(\tau_1)=v(\tau_2)=n$, one can choose
    \[\left(\begin{array}{cc}x&y\\z&w\end{array}\right)=\left(\begin{array}{cc}\frac{\tau_2}{\pi^n}&0\\0&\frac{\tau_1}{\pi^n}\end{array}\right)\]
    as the transition matrix.

\item If $v(\tau_1)<v(\tau_2)$ and $v(\lambda_1-\lambda_2)>v(\tau_1)$, since $x$ and $w$ are invertible, one has $v(\tau_1)=v(\tau_1x-\tau_2w)=v((\lambda_1-\lambda_2)y)>v(\tau_1)$, which is a contradiction. Hence, any matrix in this case determines exactly one similarity class.

\item If $v(\tau_1)<v(\tau_2)$ and $v(\lambda_1-\lambda_2)\leq v(\tau_1)$, one can choose
    \[\left(\begin{array}{cc}x&y\\z&w\end{array}\right)=\left(\begin{array}{cc}1&\frac{\tau_1-\tau_2}{\lambda_1-\lambda_2}\\0&1\end{array}\right)\]
    as the transition matrix.
\end{itemize}
These are all the similarity classes when $f$ is reducible.
\end{proof}

From Proposition \ref{14}, we infer that there exist two matrices which are similar in $\Mat_2(K)$ and in $\Mat_2(\kappa)$ modulo $\pi$, but not similar in $\Mat_2(R)$.

\subsection{$f$ is Irreducible and $2$ is a Unit ($\mathrm{char}(K)\neq2$)}\label{s4.2}
If $f$ is irreducible, let $\theta$ be a root of $f$. In this section and \S\ref{s4.3}, we do not require $f$ to be separable. To apply Proposition \ref{8}, one must examine the structure of ideals in $\OK=R[\theta]$.

Since every ideal $J$ in $\OK$ is a rank $2$ free $R$-module, we can express $J$ as $R\pi^n\oplus R(r+s\theta)$, where $r,s\in R$ and $n\geq0$. We can safely set $s=1$, because by extracting some power of $\pi$ the only scenario where $J$ is not $(1)$ occurs when $v(s)=0$.

\begin{lemma}\label{15}
Let $r$ be some element in $R$, then all the equivalence classes in
\[\big\{R\pi^n\oplus R(r+\theta)\text{ is an ideal}:n\geq0\big\}\]
are exactly given by $0\leq n\leq\min\left\{v(2r+a),\frac{1}{2}v(b-r(r+a))\right\}$.
\end{lemma}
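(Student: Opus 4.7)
The plan is to classify the ideals $J_n := R\pi^n + R(r+\theta)$ by combining two tools: the endomorphism order $\mathfrak{f}(J_n) := \{\gamma \in K[\theta] : \gamma J_n \subseteq J_n\}$, which is preserved by $\sim$ and distinguishes the classes from below, and explicit elements $\gamma$ realizing the required equivalences from above. Throughout write $y := r+\theta$, $u := 2r+a$, $t := v(u)$, and $N_0 := b - r(r+a)$; direct expansion gives $y^2 = uy + N_0$, and set $m := v(N_0) = v(b-r(r+a))$. Since $\theta y = (r+a)y + N_0$, the inclusion $\theta y \in J_n$ forces $N_0 \in R\pi^n$, so $J_n$ is an ideal of $R[\theta]$ precisely for $0 \leq n \leq m$.

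For a general $\gamma = \mu + \nu y$ with $\mu, \nu \in K$, expansion of $\gamma y = \nu N_0 + (\mu + \nu u)y$ shows that $\gamma\pi^n, \gamma y \in J_n$ together reduce to $\mu \in R$ and the single valuation inequality $v(\nu) \geq \max\{-n,\, n-m,\, -t\} = -\min\{n, m-n, t\}$. Hence
\[\mathfrak{f}(J_n) = R + R\pi^{-\min\{n,\, m-n,\, t\}}\, y,\]
and since $\mathfrak{f}(\gamma J_n) = \mathfrak{f}(J_n)$ for every $\gamma \in K[\theta]^\times$, this order is a $\sim$-invariant. In particular, the ideals $J_0, J_1, \ldots, J_{\min\{t, \lfloor m/2\rfloor\}}$ realize pairwise distinct invariants $\min\{n, m-n, t\} = n$ and therefore lie in pairwise distinct $\sim$-classes.

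It remains to show that every $J_n$ with $0 \leq n \leq m$ is equivalent to a representative in the asserted range. Two explicit moves suffice. The first is a \emph{reflection}: when $n \leq t$, multiplication by $y$ combined with $y^2 = uy + N_0$ and $u \in R\pi^t$ gives $y\cdot J_n = R\pi^n y + R N_0 = \pi^n J_{m-n}$, hence $J_n \sim J_{m-n}$; this alone covers every $n$ with $\min\{n, m-n\} \leq t$. The second is a \emph{middle collapse}: when $t \leq m/2$ and $t \leq n \leq m-t$, put $y' := \pi^{-t} y$, so that $y'^2 = u'y' + N_0'$ with $u' := u/\pi^t \in R^\times$ and $v(N_0') = m - 2t \geq 0$, and $\mathcal{O}' := R + Ry'$ is an over-order of $R[\theta]$ in $K[\theta]$. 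Writing $J_n = \pi^t J'_{n-t}$ with $J'_k := R\pi^k + R y'$, a Hermite-normal-form computation that crucially uses $u' \in R^\times$ shows $J'_1 = (\pi + y')$ when $m-2t \geq 2$ and $J'_1 = (y')$ when $m-2t \leq 1$; the multiplicative identity $J'_1 \cdot J'_{k-1} = J'_k$ (again using that $u'$ is a unit) then yields $J'_k = (J'_1)^k$ by induction for all $0 \leq k \leq m-2t$, so every $J'_k$ is principal in $\mathcal{O}'$. Consequently $J_n \sim \pi^t \mathcal{O}' = J_t$ for each $n$ in the middle range.

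Combining the two moves exhausts $[0, m]$ and sends each $J_n$ to the unique representative in $[0, \min\{t, \lfloor m/2\rfloor\}]$. The main obstacle is the middle collapse: it needs both the integrality of $y' = \pi^{-t}y$ (which forces $t \leq m/2$) and the principality of $J'_1$ in $\mathcal{O}'$, both of which rest on $u' = u/\pi^t$ being a unit. Without this unit-hypothesis the "middle" ideals would genuinely split into further classes, but those are precisely detected by the endomorphism invariant $\mathfrak{f}(J_n)$, so everything is consistent.
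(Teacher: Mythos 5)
Your proof is correct, and it takes a genuinely different route from the paper's. The paper passes to matrices via Proposition \ref{8}: equivalence of $R\pi^k\oplus R(r+\theta)$ and $R\pi^n\oplus R(r+\theta)$ is decided by solving for a $2\times2$ transition matrix $U$ and tracking the valuation of $\det(U)$, with inequivalence for $k<v(2r+a)$ coming from the impossibility of a unit determinant. You argue intrinsically on the ideals: the multiplier ring $\mathfrak{f}(J_n)=R+R\pi^{-\min\{n,\,m-n,\,t\}}(r+\theta)$ is a $\sim$-invariant that separates the representatives $0\le n\le\min\{t,\lfloor m/2\rfloor\}$ at a stroke, while the reflection $(r+\theta)\cdot J_n=\pi^n J_{m-n}$ (valid for $n\le t$) and the factorization $J'_k=(\pi+y')^k$ in the over-order $R[\pi^{-t}(r+\theta)]$ collapse all remaining indices onto that range. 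Your inequivalence argument is cleaner (an invariant, rather than an exhaustion over all admissible $U$), and the reflection symmetry $n\leftrightarrow m-n$ together with the collapse of the middle interval $[t,m-t]$ makes the shape of the answer transparent; the paper's computation is less structural but stays uniform with the matrix-theoretic style of the rest of \S\ref{s4} and directly produces the transition matrices reused later. Two small touch-ups: since $(y')=R\pi^{m-2t}+Ry'$, the identity $J'_1=(y')$ holds only in the case $m-2t=1$ (for $m-2t=0$ the ideal $J'_1$ does not occur, so nothing is lost); and a generator $\gamma$ of $J'_{n-t}$ lies in $\mathcal{O}'$ rather than in $R[\theta]$, so to match the paper's definition of $\sim$ you should multiply through by $\pi^t$, giving $\pi^t J_n=(\pi^t\gamma)J_t$ with $\pi^t\gamma\in R[\theta]$.
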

\begin{proof}
Write $T:=-f(-r)=b-r(r+a)$. By Proposition \ref{8}, we only need to determine whether the matrices
\[\left(\begin{array}{cc}-r&\pi^k\\\pi^{-k}T&a+r\end{array}\right)\text{ and }\left(\begin{array}{cc}-r&\pi^n\\\pi^{-n}T&a+r\end{array}\right)\]
are similar. Suppose $U:=\left(\begin{array}{cc}x&y\\z&w\end{array}\right)\in\Mat_2(R)$ satisfy
\[\left(\begin{array}{cc}x&y\\z&w\end{array}\right)\left(\begin{array}{cc}-r&\pi^k\\\pi^{-k}T&a+r\end{array}\right)=\left(\begin{array}{cc}-r&\pi^n\\\pi^{-n}T&a+r\end{array}\right)\left(\begin{array}{cc}x&y\\z&w\end{array}\right),\]
this means
\[\pi^{-k}Ty=\pi^nz,\quad\pi^kx+(2r+a)y=\pi^nw\]
and \[\det(U)=\pi^{n-k}w^2-\pi^{-(k+n)}y^2T-\pi^{-k}(2r+a)yw.\]
Without loss of generality, we assume $k<n$. First, let us suppose $v(T)=k+n$. Consider \[y=\pi^{\max\{0,k-v(2r+a)\}}.\]
If $v(y)=0$, we can choose $w=\pi^{k+1}$; if $v(y)>0$, we can choose $w=1$. Consequently, $U$ can be invertible, allowing us to assume $k<n\leq\frac{1}{2}v(T)$. In this scenario, $v(T)>k+n$, and $U$ can be invertible if and only if $v(2r+a)\leq k$. Thus, when $k<v(2r+a)$, these two ideals are not equivalent.
\end{proof}

This lemma does not require the invertibility of $2\in R$, even the characteristic can be $2$. With this lemma in hand, when $2\in R$ is a unit, we can classify the ideal classes of $\OK$.

\begin{lemma}\label{16}
If $2\in R$ is a unit, then any ideal class in $\OK=R[\theta]$ can be represented by an ideal of the form
\[R\pi^k\oplus R\theta',\]
where $k\geq0$ is an integer and $\theta'\in\OK\setminus R$ is a generator in some integral base. More precisely, if $\theta$ is a root of $f(x)=x^2-ax-b$, then any ideal class can be represented by $R\pi^k\oplus R\left(\theta-\frac{a}{2}\right)$.
\end{lemma}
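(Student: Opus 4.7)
The plan is to combine an ideal normal-form reduction with completing the square, invoking Lemma \ref{15} as the crucial intermediate step.

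First I would reduce any nonzero ideal $J\subseteq\OK$ to a representative of the form $R\pi^k+R(r+\theta)$. Since $R$ is a DVR, $J$ is a free $R$-module of rank $2$, so it admits a Hermite-normal-form basis $\{\pi^n,\,r_0+s\theta\}$ with $J\cap R=R\pi^n$ and $s\ne0$. Closure of $J$ under multiplication by $\theta$ (applied to $\pi^n$) forces $s\mid\pi^n$ and also $s\mid r_0$; writing $s=\pi^m\cdot(\text{unit})$ and $r_0=\pi^m r'$ and extracting the common factor $\pi^m$, I get $J=\pi^m(R\pi^{n-m}+R(r+\theta))$, equivalent to $R\pi^k+R(r+\theta)$ with $k:=n-m$.

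Next, using $2\in R^{\times}$, I would substitute $\theta':=\theta-a/2$; then $R[\theta']=R[\theta]$ and $\theta'$ satisfies $x^2-b'$ with $b':=b+a^2/4$. Rewriting, $J\sim R\pi^k+R(s+\theta')$ where $s:=r+a/2\in R$, and now the polynomial of $\theta'$ has vanishing linear coefficient.

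The third step is the decisive one: I would apply Lemma \ref{15} to the presentation $R[\theta']$ with polynomial $x^2-b'$ (so that the analogue of the coefficient $a$ is now $0$) and with $s$ in the role of $r$. Lemma \ref{15} then yields $R\pi^k+R(s+\theta')\sim R\pi^{k_0}+R(s+\theta')$ for some $k_0\le v(2s)$; because $2$ is a unit, $v(2s)=v(s)$, so $k_0\le v(s)$. The final step is routine: $s\in R\pi^{k_0}$ allows the basis element $s+\theta'$ to be reduced modulo $R\pi^{k_0}$ to $\theta'$, yielding $R\pi^{k_0}+R(s+\theta')=R\pi^{k_0}+R\theta'=R\pi^{k_0}+R(\theta-a/2)$, which is the desired representative.

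The main obstacle is Step $3$. Lemma \ref{15} as stated parametrizes equivalence classes only within a fixed family (fixed $r$, varying $n$), so I must extract from its proof the explicit reduction of $n$ outside the stated range to some $n_0$ inside it. The hypothesis $2\in R^{\times}$ enters precisely here, as it promotes the bound $k_0\le v(2s)$ to $k_0\le v(s)$, which is exactly the precondition for absorbing $s$ in Step $4$; its failure in residue characteristic $2$ is what compels the paper to treat that inseparable case separately in \S\ref{s4.4}.
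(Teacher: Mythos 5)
Your proof is correct and follows essentially the same route as the paper's: both reduce an ideal to the normal form $R\pi^n\oplus R(r+\theta)$, complete the square using $2\in R^{\times}$, and invoke Lemma \ref{15} to force the exponent down to some $k_0\le v(2r)=v(r)$, after which $r$ is absorbed into $R\pi^{k_0}$. The differences are purely organizational (you complete the square before rather than after the reduction, and argue directly rather than by contradiction as the paper does in its $a=0$ case).
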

\begin{proof}
Let us consider the case where $a=0$. For an ideal $J=R\pi^n\oplus R(r+\theta)$, we claim that $r=0$. If not, we proceed with the following assumptions:
\begin{itemize}
\item[(1)] $v(r)<n$. Otherwise, one can utilize $\pi^n$ to decrease the valuation of $r$.
\item[(2)] $v(b-r^2)>v(r)+n$. This is because $(\theta-r)J=R(b-r^2)\oplus R(r\pi^n-\pi^n\theta)\subseteq J$, allowing us to utilize $T:=b-r^2$ to decrease $v(r\pi^n)$.
\end{itemize}
However, Lemma \ref{15} implies $0\leq n\leq\min\{v(r),\frac{1}{2}v(T)\}$, which contradicts (1).

If $a\neq0$, it is easy to see $\theta':=\theta-\frac{a}{2}$ is a root of the irreducible polynomial $x^2-\left(\frac{a^2}{4}+b\right)\in R[x]$. Consequently, we can follow the aforementioned steps for $R[\theta']=R[\theta]$. Thus, any ideal class in $R[\theta]$ must take the form $R\pi^n\oplus R\theta'$.
\end{proof}

Using this classification, we can list the representative element from each similarity class, which will play the role of standard forms of similarity.

\begin{proposition}\label{17}
If $2\in R$ is a unit, then all similarity classes in
\[\Big\{A\in\Mat_2(R):\text{ characteristic polynomial }f=x^2-ax-b\text{ of }A\text{ is irreducible}\Big\}\]
are represented by matrices
\[\left(\begin{array}{cc}\frac{a}{2}&\pi^k\\\left(\frac{a^2}{4}+b\right)\pi^{-k}&\frac{a}{2}\end{array}\right),\]
where $v\left(\frac{a^2}{4}+b\right)\geq2k\geq0$.
\end{proposition}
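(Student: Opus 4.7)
The plan is to combine the ideal-class classification of Lemmas \ref{15} and \ref{16} with the Latimer--MacDuffee correspondence of Theorem \ref{2}, and then convert each distinguished ideal back into a matrix through an explicit choice of $R$-basis. Since $f$ is irreducible, $\OK := R[x]/(f) = R[\theta]$ is an integral domain in which every nonzero element is a non-zero divisor, and every nonzero ideal is $R$-free of rank $2$ (being a nonzero submodule of the rank-$2$ free $R$-module $\OK$ over the PID $R$). Hence Theorem \ref{2} reduces the enumeration of similarity classes with characteristic polynomial $f$ to the enumeration of ideal classes of $\OK$.

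Next I would invoke Lemma \ref{16} to restrict attention to ideals of the shape $J_k := R\pi^k \oplus R(\theta - a/2)$, $k \geq 0$, and then apply Lemma \ref{15} with $r = -a/2$, so that $2r + a = 0$ and $b - r(r+a) = a^2/4 + b$, to conclude that $\{J_k : 0 \leq 2k \leq v(a^2/4 + b)\}$ is a complete and non-redundant list of ideal classes. This is exactly where the upper bound on $k$ in the statement originates.

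Finally, each $J_k$ is converted into its matrix via the rule $\overline{x}\cdot(u_1,u_2) = (u_1,u_2) A$ of Theorem \ref{2}. I would use the basis $u_1 := \theta - a/2$, $u_2 := \pi^k$, noting that $(a^2/4+b)/\pi^k$ lies in $R$ because $2k \leq v(a^2/4+b)$. Using $(\theta - a/2)^2 = a^2/4 + b$, a short computation gives $\theta u_1 = (a/2)u_1 + ((a^2/4+b)/\pi^k)\, u_2$ and $\theta u_2 = \pi^k u_1 + (a/2) u_2$, from which the displayed matrix can be read off directly. I do not expect a serious obstacle here: the statement is essentially a repackaging of Theorem \ref{2} and Lemmas \ref{15}--\ref{16}; the only point demanding attention is the clean substitution $r = -a/2$ inside Lemma \ref{15}, after which both the range of $k$ and the matrix entries fall out mechanically.
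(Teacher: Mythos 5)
Your proposal is correct and follows essentially the same route as the paper: reduce to ideal classes via the Latimer--MacDuffee correspondence, normalize the ideals to the form $R\pi^k\oplus R(\theta-\tfrac{a}{2})$ by Lemma \ref{16}, and read off the matrix from the basis $(\theta-\tfrac{a}{2},\pi^k)$. The only cosmetic difference is that you get the exact range $0\le 2k\le v(\tfrac{a^2}{4}+b)$ and the pairwise inequivalence by specializing Lemma \ref{15} at $r=-\tfrac{a}{2}$ (so that $2r+a=0$), whereas the paper re-runs that similarity computation directly on the displayed matrices inside the proof; both amount to the same calculation.
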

\begin{proof}
By Proposition \ref{8}, the problem reduces to classifying the ideal classes in $\OK$, and this is provided by Lemma \ref{16}. It is straightforward to confirm that any ideal taking the form $R\pi^k\oplus R\left(\theta-\frac{a}{2}\right)$ corresponds to the matrix stated above. Furthermore, if there exists a matrix $\left(\begin{array}{cc}x&y\\z&w\end{array}\right)\in\Mat_2(R)$ such that (assume $k<l$)
\[\left(\begin{array}{cc}x&y\\z&w\end{array}\right)\left(\begin{array}{cc}\frac{a}{2}&\pi^k\\\left(\frac{a^2}{4}+b\right)\pi^{-k}&\frac{a}{2}\end{array}\right)=\left(\begin{array}{cc}\frac{a}{2}&\pi^l\\\left(\frac{a^2}{4}+b\right)\pi^{-l}&\frac{a}{2}\end{array}\right)\left(\begin{array}{cc}x&y\\z&w\end{array}\right),\]
then $w\pi^l=x\pi^k$ and $z=\left(\frac{a^2}{4}+b\right)y\pi^{-k-l}$. If $k=v\left(\frac{a^2}{4}+b\right)-l$, one can select
\[\left(\begin{array}{cc}x&y\\z&w\end{array}\right)=\left(\begin{array}{cc}\pi^{l-k}&1\\\frac{\frac{a^2}{4}+b}{\pi^{k+l}}&1\end{array}\right),\]
which is invertible. Therefore, we may assume $k<l\leq\frac{1}{2}v\left(\frac{a^2}{4}+b\right)$. Now, the determinant is not in $R^{\times}$ since $v(x)=v(\pi^{l-k} w)>0$ and $v(z)=v\left(\left(\frac{a^2}{4}+b\right)y\pi^{-k-l}\right)>0$, indicating that these two matrices cannot be similar.
\end{proof}

Under the assumptions above, as a direct corollary, the class number of $\OK$ is
\[\#\Cl(\OK)=\left\lfloor\frac{1}{2}v\left(\frac{a^2}{4}+b\right)\right\rfloor+1.\]
\subsection{$f$ is Irreducible and $2$ is Not a Unit ($\mathrm{char}(K)\neq2$)}\label{s4.3}
We continue to denote $f(x)=x^2-ax-b$, and $\OK=R[\theta]$ is the integral extension determined by this polynomial. Let us assume the valuation of $2\in R$ is $v(2)=e>0$. We begin with the following lemma. Recall that by Lemma \ref{15}, any ideal in $\OK$ takes the form $R\pi^n\oplus R(r+\theta)$.

\begin{lemma}\label{18}
Suppose $R$ has a uniformizer $\pi$ and $v(2)=e>0$.
\begin{itemize}
\item[(1)] If $v(a)<e$, let $m$ be the maximum integer in $\{0,1,\cdots,v(a)\}$ such that
    \[v(b-r(r+a))\geq2m\]
    for some $r\in R$. Then, all the ideal classes in $\OK$ are
    \[R\pi^k\oplus R(r+\theta),\quad k=0,1,\cdots,m.\]
\item[(2)] For $v(a)\geq e$, write $\Delta:=\frac{a^2}{4}+b$, $\theta=\frac{a}{2}+\sqrt{\Delta}$.
\begin{itemize}
\item[(2.1)] If $v(\Delta)$ is odd, then all the ideal classes in $\OK$ are
    \[R\pi^k\oplus R\left(\theta-\frac{a}{2}\right),\quad k=0,1,\cdots,\left\lfloor\frac{1}{2}v(\Delta)\right\rfloor.\]
\item[(2.2)] If $v(\Delta)$ is even, let $m$ be the maximum integer in $\{0,1,\cdots,e\}$ such that
    \[v(\Delta-r^2)\geq2m+v(\Delta)\]
    for some $r\in R$, then all the ideal classes in $\OK$ are
    \[R\pi^k\oplus R\left(\theta-\frac{a}{2}\right),\quad k=0,1,\cdots,\frac{1}{2}v(\Delta)\]
    and
    \[R\pi^{\frac{1}{2}v(\Delta)+i}\oplus R\left(r+\theta-\frac{a}{2}\right),\quad i=1,\cdots,m~(\text{if }m\geq1).\]
\end{itemize}
\end{itemize}
Moreover, the class number of $\OK$ is
\[\#\Cl(\OK)=\left\{\begin{array}{ll}m+1,&\text{ case (1)}\\ \left\lfloor\frac{1}{2}v(\Delta)\right\rfloor+1,&\text{ case (2.1)}\\ \frac{1}{2}v(\Delta)+m+1,&\text{ case (2.2)}\end{array}\right..\]
\end{lemma}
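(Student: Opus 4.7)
Following the strategy of Lemma 16, the plan is to bring every ideal of $\OK = R[\theta]$ to the canonical form $R\pi^n \oplus R(r+\theta)$ and then systematically eliminate redundant representatives using Lemma 15 together with one explicit multiplication operation. The essential new difficulty compared with Lemma 16 is that $2 \notin R^\times$, so $v(2r+a)$ is no longer automatically zero and the bound in Lemma 15 becomes nontrivial, varying with both $r$ and with the hypothesis on $v(a)$ versus $e$.

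First, any nonzero ideal $J$ of $\OK$, after dividing out a power of $\pi$, can be written as $J = R\pi^n \oplus R(r+\theta)$ with $r \in R$ and $n \geq 0$, and closure under multiplication by $\theta$ forces $n \leq v(b - r(r+a))$. By Lemma 15, for fixed $r$ the equivalence classes of such ideals are parametrized by integers $n$ in the range $[0,\ \min\{v(2r+a),\ \tfrac{1}{2}v(b - r(r+a))\}]$. The central device for relating different values of $r$ is the identity
\[(\theta - r - a)(r + \theta) = b - r(r+a) =: T \in R.\]
Multiplying $J$ by $\theta - r - a$ yields the ideal with $R$-basis $\{\pi^n(\theta - r - a),\ T\}$; rescaling by $\pi^{-n}$ in the equivalence relation puts this in canonical form $R\pi^{v(T)-n} \oplus R(-r-a+\theta)$. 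This is an explicit involution that swaps $(n, r)$ with $(v(T)-n, -r-a)$ whenever the right-hand side is in the Lemma 15 admissible range; otherwise a further reduction via Lemma 15 is required.

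The proof now splits according to $v(a)$ versus $e$. In case (1), $v(a) < e$ forces $v(2r + a) = v(a)$ for every $r \in R$, so the bound from Lemma 15 becomes $n \leq \min\{v(a),\ v(T)/2\}$, and the maximum $n$ achievable as $r$ varies is exactly $m$. To show that for each admissible $k \leq m$ all valid $r$ (mod $\pi^k$) yield equivalent ideals, I combine the involution above with the trivial translation freedom $r \mapsto r + c\pi^k$. In case (2), $v(a) \geq e$ lets me complete the square via $\theta' = \theta - a/2 \in \OK$, which satisfies $(\theta')^2 = \Delta$, and the bound becomes $n \leq \min\{e + v(r),\ \tfrac{1}{2}v(\Delta - r^2)\}$. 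In sub-case (2.1), $v(\Delta)$ odd implies $v(\Delta - r^2) \leq v(\Delta)$ for every $r$ (since $2v(r)$ is even and cannot match the odd $v(\Delta)$), so the bound is always $\leq \lfloor v(\Delta)/2 \rfloor$ and every ideal reduces to one with $r = 0$ exactly as in Lemma 16. In sub-case (2.2), $v(\Delta)$ even, the unit part of $\Delta$ may be a high-order square modulo powers of $\pi$, so that $v(\Delta - r^2) > v(\Delta)$ for some $r$; this creates extra classes with $n > v(\Delta)/2$ that are unreachable from $r = 0$, and the maximum such excess is precisely the integer $m$ in the statement.

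The principal technical obstacle, I anticipate, lies in sub-case (2.2): verifying that the extra classes $R\pi^{v(\Delta)/2 + i} \oplus R(r+\theta')$ are genuinely distinct from one another and from the classes $R\pi^k \oplus R\theta'$. Lemma 15 only yields distinctness for fixed $r$, whereas here both $k$ and $r$ vary and the involution has the potential to identify classes. I intend to address this by using the norm index $[\OK : J] = \pi^n R$ as a coarse invariant, refined by the residue of $r$ modulo the appropriate power of $\pi$, and then verifying that neither the involution $(n, r) \mapsto (v(T)-n, -r-a)$ nor the $r$-translation identifies any two members of the claimed list. Once this bookkeeping is complete, the formula for $\#\Cl(\OK)$ follows by summing the contributions in each case.
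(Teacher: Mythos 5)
Your overall architecture matches the paper's: reduce to the normal form $R\pi^n\oplus R(r+\theta)$, invoke Lemma \ref{15} for fixed $r$, split on $v(a)$ versus $e$ and on the parity of $v(\Delta)$, and locate the extra classes in case (2.2) in the range $\frac{1}{2}v(\Delta)<n\leq\frac{1}{2}v(\Delta)+m$ with $m\leq e$ coming from the bound $n\leq v(2r)=v(r)+e$. The identity $(\theta-r-a)(r+\theta)=b-r(r+a)$ and the resulting involution $(n,r)\mapsto(v(T)-n,-r-a)$ is a correct and useful observation (it is implicitly what makes the case $v(T)=k+n$ of Lemma \ref{15} an equivalence). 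Cases (1) and (2.1) are essentially sound, although in case (1) you still owe the concrete computation that any two admissible $r,r'$ satisfy $v(r-r')\geq k$ (the paper does this by expanding $b-r(r+a)$ at $r=r'+r_0$ and deriving a contradiction from $v(r_0)<k\leq v(a)=v(a+2r')$); translation freedom alone then finishes that case, and the involution is not actually needed there.

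The genuine gap is your plan for distinctness in case (2.2). First, $[\OK:J]=\pi^nR$ is \emph{not} an invariant of the equivalence class: multiplying $J$ by a non-zero-divisor $\alpha$ multiplies the index by the norm of $\alpha$ (already $J\sim\pi J$ changes it by $\pi^2$), so it cannot serve even as a coarse separator between the family $R\pi^k\oplus R\theta'$ and the family $R\pi^{v(\Delta)/2+i}\oplus R(r+\theta')$. Second, verifying that the involution and the $r$-translations do not identify two members of the list does not prove inequivalence: the relation $\alpha_1J_1=\alpha_2J_2$ ranges over \emph{all} non-zero-divisors $\alpha_i\in\OK$, a two-parameter family, and you have not shown that your two moves generate all such equivalences. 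What is needed is an honest class invariant or a functorial comparison. The paper's device is to write $\Delta=\Delta_0\pi^{2v(r)}$ and $r=r_0\pi^{v(r)}$ with $\Delta_0,r_0\in R^\times$ and push the ideals forward along the injection $R[\sqrt{\Delta}]\hookrightarrow R[\sqrt{\Delta_0}]$: extension of ideals respects the equivalence relation, the ideals $R\pi^k\oplus R\sqrt{\Delta}$ all extend to principal ideals, while $R\pi^{v(\Delta)/2+i}\oplus R(r+\sqrt{\Delta})$ extends to (a multiple of) $R\pi^i\oplus R(r_0+\sqrt{\Delta_0})$, which is nontrivial for $i\geq1$ by the already-settled case $v(\Delta_0)=0$. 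Without this (or an equivalent invariant such as the multiplier ring of $J$), your proof of the class-number formula in case (2.2) is incomplete.
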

\begin{proof}
In case (1), we have $v(2r+a)=v(a)$. Hence, for an integer $m$ and some $r\in R$ given by the condition, Lemma \ref{15} yields ideal classes $R\pi^k\oplus R(r+\theta)$ with $k=0,1,\cdots,m$. If there exists another $r'$ satisfying the condition, we assert $R\pi^k\oplus R(r+\theta)$ and $R\pi^k\oplus R(r'+\theta)$ are equivalent. Indeed, one can show $v(r-r')\geq k$. If not, suppose $v(r_0)<k$ with $r_0=r-r'$, then
\begin{align*}
2k\leq&v(b-r(r+a))\\=&v(b-(r_0+r')(r_0+r'+a))\\=&v(b-r'(r'+a)-r_0^2-r_0(a+2r'))\\=&v(r_0^2)\\<&2k,
\end{align*}
since $v(b-r'(r'+a))\geq2k$, $v(r_0^2)<2k$ and $v(r_0)<k\leq v(a)=v(a+2r')$. This is a contradiction.

In case (2), one can set $\theta':=\theta-\frac{a}{2}$ to simplify (2) to the case of $f(x)=x^2-\Delta$. Here $\theta'=\sqrt{\Delta}$.
\begin{itemize}
\item We first consider the case $v(\Delta)=0$. If $v(\Delta-r^2)=0$, there is only one ideal class $\OK$. So we may assume $v(\Delta-r^2)>0$, therefore $r\in R^{\times}$ and $v(2r)=e$. By Lemma \ref{15} and the definition of $m$, there exist finitely many distinct ideal classes
    \[R\pi^i\oplus R(r+\sqrt{\Delta})\]
    with $i=0,1,\cdots,m$. Utilizing the same argument as in the proof of case (1), one can show that altering $r$ to another one such that $v(2r)=e$ does not alter the ideal class.
\item Suppose $v(\Delta)>0$. If $r=0$ we obtain the ideal classes of the form
    \[R\pi^k\oplus R\sqrt{\Delta},\]
    where $k=0,1,\cdots,\left\lfloor\frac{1}{2}v(\Delta)\right\rfloor$. Now, let us consider the case where $r\neq0$. According to Lemma \ref{15}, the only condition for additional ideals of the form
    \[R\pi^i\oplus R(r+\sqrt{\Delta})\]
    to emerge is $\frac{1}{2}v(\Delta-r^2)>v(r)$. This implies that $v(\Delta)=v(r^2)$ is an even number, thus $2m+v(\Delta)\leq v(\Delta-r^2)$ for some $m\geq1$. Suppose $m\leq e$ is the largest number satisfying $2m+v(\Delta)\leq v(\Delta-r^2)$, then the additional ideals precisely take the form
    \[R\pi^{\frac{1}{2}v(\Delta)+i}\oplus R(r+\sqrt{\Delta}),\]
    where $i=1,\cdots,m$. Here, we require $m\leq e$ because $\frac{1}{2}v(\Delta)+i\leq v(r)+m\leq v(2r)=v(r)+e$. The equivalence classes also independent of $r$ when $v(r)=\frac{1}{2}v(\Delta)$. Additionally, these ideals are not equivalent to the ideals in the case $r=0$, since if we express $\Delta=\Delta_0\pi^{2v(r)}$ and $r=r_0\pi^{v(r)}$ for some $\Delta_0,r_0\in R^{\times}$, under the ring homomorphism $R[\sqrt{\Delta}]\hookrightarrow R[\sqrt{\Delta_0}]$ they are mapped to the classes $\OK$ and $R\pi^i\oplus R(r_0+\sqrt{\Delta_0})$, respectively.
\end{itemize}
Hence, the conclusion is validated.
\end{proof}

As before, we can list the standard representative elements from each similarity class as follows.

\begin{proposition}\label{19}
Under the assumptions of Lemma \ref{18}, all the similarity classes in
\[\Big\{A\in\Mat_2(R):\text{ characteristic polynomial }f=x^2-ax-b\text{ of }A\text{ is irreducible}\Big\}\]
are represented by matrices:
\begin{itemize}
\item Case (1): $\left(\begin{array}{cc}-r&\pi^i\\\frac{b-r(r+a)}{\pi^i}&a+r\end{array}\right)$, where $i=0,1,\cdots,m$.

\item Case (2.1): $\left(\begin{array}{cc}\frac{a}{2}&\pi^n\\\Delta\pi^{-n}&\frac{a}{2}\end{array}\right)$, where $n=0,1,\cdots,\left\lfloor\frac{v(\Delta)}{2}\right\rfloor$.

\item Case (2.2): $\left(\begin{array}{cc}\frac{a}{2}&\pi^n\\\Delta\pi^{-n}&\frac{a}{2}\end{array}\right)$, where $n=0,1,\cdots,\frac{v(\Delta)}{2}$; and $\left(\begin{array}{cc}\frac{a}{2}-r&\pi^{v(\Delta)/2+i}\\ \frac{\Delta-r^2}{\pi^{v(\Delta)/2+i}}&\frac{a}{2}+r\end{array}\right)$,
    where $i=1,\cdots,m$ (if $m\geq1$).
\end{itemize}
Here we denote $\Delta:=\frac{a^2}{4}+b$.
\end{proposition}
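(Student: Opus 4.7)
The plan is to translate the classification of ideal classes from Lemma~\ref{18} into an enumeration of similarity classes via the Latimer--MacDuffee correspondence. Since $R$ is a DVR, hence a PID, Proposition~\ref{8} yields a bijection between similarity classes in $\Mat_2(R)$ with characteristic polynomial $f$ and ideal classes of $\OK = R[\theta]$. This bijection sends a matrix $A$ to $R\langle X\rangle$ for any column vector $X \in (R[\theta])^2$ with $AX = \theta X$; inversely, given an ideal $J$ with $R$-basis $(u_1, u_2)$, the associated matrix is determined by $A(u_1, u_2)^T = \theta(u_1, u_2)^T$.

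The key calculation is an explicit formula: for an ideal of the generic shape $J = R\pi^n \oplus R(s+\theta)$, taking $X = (\pi^n,\, s+\theta)^T$ and solving $AX = \theta X$ coordinate-wise produces
\[
A \;=\; \begin{pmatrix} -s & \pi^n \\ \dfrac{b - s(s+a)}{\pi^n} & a+s \end{pmatrix}.
\]
The closure condition that $J$ be an ideal of $\OK$ forces $\pi^n \mid b - s(s+a)$, so $A \in \Mat_2(R)$, and one directly verifies $\det(xI_2 - A) = f$.

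With this formula in hand, the proof becomes bookkeeping: Lemma~\ref{18} lists the ideal classes in each subcase, and one reads off the corresponding matrices. In case~(1) I would substitute $s = r$ and $n = i$ directly to recover the first family. In cases~(2.1) and~(2.2) I would first pass to the generator $\theta' := \theta - a/2 \in \OK$, which is well-defined because $v(a) \geq e = v(2)$, so that the generic ideal takes the form $R\pi^n \oplus R(s' + \theta')$ with $s' = -a/2$ in case~(2.1) and $s' = r - a/2$ in case~(2.2); the quantity $b - s'(s' + a)$ then simplifies to $\Delta$ and to $\Delta - r^2$ respectively, recovering precisely the entries stated in the proposition.

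The main obstacle is not a deep step but the care required to track the coordinate change $\theta \leadsto \theta - a/2$ in case~(2), so that the lower-left entries $\Delta\pi^{-n}$ and $(\Delta - r^2)\pi^{-(v(\Delta)/2 + i)}$ emerge with the correct signs and exponents. Pairwise non-similarity of the listed matrices is automatic from the pairwise inequivalence of the underlying ideals, which Lemma~\ref{18} has already established; hence no additional verification on the matrix side is needed.
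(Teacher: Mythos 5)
Your proposal is correct and follows exactly the paper's route: the paper's proof is the one-line citation ``This is Proposition \ref{8} and Lemma \ref{18},'' and you have simply carried out the implicit computation translating each ideal $R\pi^n\oplus R(s+\theta)$ of Lemma \ref{18} into its matrix via the Latimer--MacDuffee correspondence (the same matrices already appear in the proof of Lemma \ref{15}). Your explicit formula and the shift $\theta\leadsto\theta-a/2$ in case (2), legitimate since $v(a)\geq v(2)$, check out.
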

\begin{proof}
This is Proposition \ref{8} and Lemma \ref{18}.
\end{proof}

We compare our result with the corresponding classification that we are familiar with in finite fields.

Fix a prime $p\neq2$, let $F$ be a finite field of characteristic $p$, then any element $g\in\GL_2(F)$ with characteristic polynomial $f\in F[x]$ must fall into one of the following conjugacy classes:
\begin{itemize}
\item $\left(\begin{array}{cc}\alpha&0\\0&\beta\end{array}\right)$, if $f$ has two distinct roots $\alpha,\beta\in F^{\times}$.

\item $\left(\begin{array}{cc}\alpha&0\\0&\alpha\end{array}\right)$ or $\left(\begin{array}{cc}\alpha&1\\0&\alpha\end{array}\right)$, if $f$ has a multiple root $\alpha\in F^{\times}$.
\end{itemize}
The above two classes correspond to the case in \S\ref{s4.1}.
\begin{itemize}
\item $\left(\begin{array}{cc}0&b\\1&a\end{array}\right)$, if $f=x^2-ax-b$ is irreducible in $F[x]$.
\end{itemize}
This class comes from the degenerate case of Proposition \ref{19}.
\subsection{$f$ is Irreducible and $\mathrm{char}(K)=2$}\label{s4.4}
Let $f(x)=x^2-ax-b\in R[x]$ be an irreducible polynomial and $\OK=R[\theta]$ be the integral extension of $R$ by $f$. First, assume $f$ is separable, then $a\neq0$. Since Lemma \ref{15} still holds, note that $v(2)=v(0)=\infty$, the case (1) in Lemma \ref{18} becomes:

\begin{lemma}\label{20}
Suppose $R$ has a uniformizer $\pi$ and $f$ is separable. Let $m$ be the maximum integer in $\{0,1,\cdots,v(a)\}$ such that
\[v(b-r(r+a))\geq2m\]
for some $r\in R$. Then, all the ideal classes in $\OK$ are
\[R\pi^k\oplus R(r+\theta),\quad k=0,1,\cdots,m.\]
Hence, all the similarity classes in
\[\Big\{A\in\Mat_2(R):\text{ characteristic polynomial }f=x^2-ax-b\text{ of }A\text{ is separable}\Big\}\]
are represented by $\left(\begin{array}{cc}-r&\pi^i\\\frac{b-r(r+a)}{\pi^i}&a+r\end{array}\right)$, $i=0,1,\cdots,m$.
\end{lemma}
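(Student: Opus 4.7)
The plan is to specialize the argument of Case (1) in Lemma \ref{18} to characteristic $2$. Via Proposition \ref{8}, it suffices to classify the ideal classes in $\OK = R[\theta]$, and Lemma \ref{15} reduces this to parametrizing ideals as $R\pi^n \oplus R(r+\theta)$ with $n$ bounded by $\min\{v(2r+a), \lfloor\frac{1}{2}v(b-r(r+a))\rfloor\}$. In characteristic $2$ one has $2r = 0$, and since $f$ is separable $a = -f'(\theta) \neq 0$, so $v(2r+a) = v(a)$ for every $r \in R$. The bound therefore simplifies to $\min\{v(a), \lfloor\frac{1}{2}v(b-r(r+a))\rfloor\}$; fixing an $r$ that witnesses the maximum $m$ (so that $v(b-r(r+a)) \geq 2m$), Lemma \ref{15} immediately produces $m+1$ pairwise inequivalent classes $R\pi^k \oplus R(r+\theta)$ for $k = 0, 1, \ldots, m$.

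The substantive step is to show that varying $r$ does not produce new classes. Concretely, if $r, r' \in R$ both satisfy $v(b - r(r+a)) \geq 2k$ and $v(b - r'(r'+a)) \geq 2k$ for some $k \leq m \leq v(a)$, then I would show that $v(r-r') \geq k$, so that $R\pi^k \oplus R(r+\theta) = R\pi^k \oplus R(r'+\theta)$. Setting $r_0 := r - r'$, the characteristic-$2$ expansion
\[(r_0 + r')(r_0 + r' + a) = r_0(r_0 + a) + r'(r' + a)\]
shows that if $v(r_0) < k \leq v(a)$, then $v(r_0 + a) = v(r_0)$, giving $v(r_0(r_0+a)) = 2v(r_0) < 2k$, contradicting $v(b - r(r+a)) \geq 2k$.

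For $n > m$ no ideal of the form $R\pi^n \oplus R(r+\theta)$ arises: either $n > v(a)$ (ruled out by the $\min\{v(a), \ldots\}$ bound from Lemma \ref{15}), or $n \leq v(a)$ and the maximality of $m$ forbids any $r$ from satisfying $v(b-r(r+a)) \geq 2n$. The matrix representatives in the stated form are then read off by computing the action of multiplication by $\theta$ on the $R$-basis $\{\pi^k, r+\theta\}$ of the ideal, exactly as in Proposition \ref{19}. The main obstacle, which is arithmetic rather than conceptual, is verifying the characteristic-$2$ expansion above and confirming that it still forces the strict inequality $2v(r_0) < 2k$; once this is done the argument reduces to a direct specialization of Case (1) of Lemma \ref{18}.
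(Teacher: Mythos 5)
Your proposal is correct and follows essentially the same route as the paper, which states Lemma \ref{20} as the direct specialization of case (1) of Lemma \ref{18} to $v(2)=\infty$; your $r$-independence computation via $(r_0+r')(r_0+r'+a)=r_0(r_0+a)+r'(r'+a)$ is exactly the paper's expansion $b-r'(r'+a)-r_0^2-r_0(a+2r')$ with the term $2r_0r'$ killed by the characteristic, and both yield $v(b-r(r+a))=2v(r_0)<2k$. The only cosmetic difference is that you spell out the specialization that the paper leaves implicit.
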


However, if $f$ is inseparable, our discussion will become more complicated. In this case $a=0$, so $f(x)=x^2-b$, where $b\in R\setminus R^2$. Since the characteristic is $2$, any matrix with this characteristic polynomial must have the form $\left(\begin{array}{cc}u&s\\t&u\end{array}\right)\in\Mat_2(R)$ with $u^2+st=b$.

Suppose we have two matrices $\left(\begin{array}{cc}u&s\\t&u\end{array}\right)$ and $\left(\begin{array}{cc}u'&s'\\t'&u'\end{array}\right)$ with the same characteristic polynomial $f(x)=x^2-b$. Because the matrix here is similar to its transpose, without loss of generality we may assume that $v(t)\leq v(s)$, $v(t')\leq v(s')$ and $v(t')\leq v(t)$.

\begin{lemma}\label{21}
Under the assumptions above,
\[\left(\begin{array}{cc}u&s\\t&u\end{array}\right)\text{ and }\left(\begin{array}{cc}u'&s'\\t'&u'\end{array}\right)\]
are similar if and only if $v(t)=v(t')$. Hence, all the similarity classes in
\[\Big\{A\in\Mat_2(R):\text{ characteristic polynomial }f=x^2-b\text{ of }A\text{ is inseparable}\Big\}\]
are represented by $\left(\begin{array}{cc}u_i&s_i\\\pi^i&u_i\end{array}\right)$, $i\geq0$, where we choose one solution $u_i,s_i$ of $u_i^2+s_i\pi^i=b,v(s_i)\geq i$ for each $i$.
\end{lemma}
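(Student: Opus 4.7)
The plan is to show that, after the normalization $v(t) \le v(s)$, the valuation $i = v(t)$ is a complete similarity invariant, which simultaneously yields both directions of the lemma. I will first establish that $v(t)$ is a similarity invariant, and then show every matrix with $v(t) = i$ can be conjugated into the prescribed standard form $\begin{pmatrix} u_i & s_i \\ \pi^i & u_i \end{pmatrix}$.

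\textbf{Invariance of $v(t)$.} Assume for contradiction that $j := v(t') < v(t) =: i$. Since $v(s) \ge v(t) = i \ge j+1$ and $v(t) \ge j+1$, both off-diagonal entries of $A$ vanish modulo $\pi^{j+1}$, so $A \equiv uI \pmod{\pi^{j+1}}$. For any $U \in \GL_2(R)$ the reduction $A' = UAU^{-1} \equiv U(uI)U^{-1} = uI \pmod{\pi^{j+1}}$ would force the $(2,1)$-entry $t'$ of $A'$ to vanish modulo $\pi^{j+1}$, contradicting $v(t') = j < j+1$. Hence no such $U$ exists unless $v(t) = v(t')$.

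\textbf{Reduction to standard form.} Write $t = \epsilon\pi^i$ with $\epsilon \in R^\times$. Conjugation by $\mathrm{diag}(\epsilon, 1)$ replaces $t$ by $\pi^i$ and $s$ by $\epsilon s$ while keeping the diagonal $u$, so (after renaming) we may assume $A = \begin{pmatrix} u & s \\ \pi^i & u \end{pmatrix}$ with $u^2 + s\pi^i = b$ and $v(s) \ge i$. Both $u^2$ and $u_i^2$ are congruent to $b$ modulo $\pi^{2i}$, so $v((u+u_i)^2) = v(u^2 - u_i^2) \ge 2i$ in characteristic $2$, and we can write $u = u_i + y\pi^i$ with $y \in R$. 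A direct calculation---in which the essential characteristic-$2$ identity $-y\pi^i = y\pi^i$ keeps both diagonal entries equal---shows that conjugation by $\begin{pmatrix} 1 & y \\ 0 & 1 \end{pmatrix}$ turns $A$ into $\begin{pmatrix} u_i & s + y^2\pi^i \\ \pi^i & u_i \end{pmatrix}$, and the characteristic polynomial constraint then forces $s + y^2\pi^i = s_i$.

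Combining these two steps puts the similarity classes with $v(t) = i$ into bijection with the single standard form $\begin{pmatrix} u_i & s_i \\ \pi^i & u_i \end{pmatrix}$, and the reduction argument itself shows that whenever an $A$ with $v(t) = i$ exists, a solution $(u, s)$ of $u^2 + s\pi^i = b$ with $v(s) \ge i$ exists, so the representatives are indexed precisely by those $i \ge 0$ for which the chosen $(u_i, s_i)$ exists. The main obstacle is the normal-form step: it depends on a delicate interplay between the characteristic-$2$ square identity $u^2 - u_i^2 = (u+u_i)^2$ and the diagonal preservation under upper-triangular conjugation, both of which fail in characteristic different from $2$ and illustrate why this case must be handled separately from Lemma \ref{20}.
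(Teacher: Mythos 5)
Your proof is correct, and it takes a genuinely different route from the paper. The paper attacks the intertwining equation $UA=A'U$ head-on: it writes it as a $4\times4$ linear system in the entries of $U$, performs Gaussian elimination using $(u+u')^2=st+s't'$, parametrizes the full solution space by $z,w$, and reads off $\det(U)=(tw^2+sz^2)/t'$, which is a unit exactly when $v(t)=v(t')$; this produces the explicit transition matrix $\left(\begin{smallmatrix}t/t'&(u+u')/t'\\0&1\end{smallmatrix}\right)$ in one stroke. You instead split the statement into an invariance step and a normal-form step. Your invariance argument --- if $\min\{v(t),v(s)\}=i>j$ then $A\equiv uI\pmod{\pi^{j+1}}$, so any $\GL_2(R)$-conjugate is also scalar mod $\pi^{j+1}$, contradicting $v(t')=j$ --- is cleaner than the elimination computation and notably works in any characteristic. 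Your normal-form step (scale $t$ to $\pi^i$ by a diagonal unit, then conjugate by $\left(\begin{smallmatrix}1&y\\0&1\end{smallmatrix}\right)$ with $u=u_i+y\pi^i$, using $u^2-u_i^2=(u+u_i)^2$ and $2y\pi^i=0$ to land exactly on $\left(\begin{smallmatrix}u_i&s_i\\\pi^i&u_i\end{smallmatrix}\right)$) is where characteristic $2$ genuinely enters, and it has the added merit of making explicit that the list of representatives is indexed by those $i$ for which $u^2+s\pi^i=b$, $v(s)\geq i$ is solvable --- a point the paper leaves implicit. What the paper's approach buys in exchange is the complete description of all intertwiners, not just one; what yours buys is transparency about why $v(t)$ is the invariant and a shorter verification.
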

\begin{proof}
Let $U\left(\begin{array}{cc}u&s\\t&u\end{array}\right)=\left(\begin{array}{cc}u'&s'\\t'&u'\end{array}\right)U$ with $U=\left(\begin{array}{cc}x&y\\z&w\end{array}\right)\in\GL_2(R)$, this is equivalent to the following linear equations system
\[\left(\begin{array}{cccc}u+u'&t&s'&0\\s&u+u'&0&s'\\t'&0&u+u'&t\\0&t'&s&u+u'\end{array}\right)\left(\begin{array}{c}x\\y\\z\\w\end{array}\right)=0.\]
Since $u^2+st=u'^2+s't'$, which means $(u+u')^2=st+s't'$, we have $2v(u+u')=v(st+s't')\geq2v(t')$, so $v(u+u')\geq v(t')$. According to the assumptions $v(t')\leq v(t)\leq v(s)$ and $v(t')\leq v(s')$, the Gaussian elimination method gives
\[\left(\begin{array}{cccc}t'&0&u+u'&t\\0&t'&s&u+u'\\0&0&(u+u')^2+st+s't'&0\\0&0&0&(u+u')^2+st+s't'\end{array}\right)\left(\begin{array}{c}x\\y\\z\\w\end{array}\right)=0.\]
Note that $(u+u')^2+st+s't'=(u^2+st)+(u'^2+s't')=2b=0$, so we can directly obtain the solution of the above linear equation
\[x=\frac{(u+u')z+tw}{t'}\quad\text{and}\quad y=\frac{sz+(u+u')w}{t'}.\]
Obviously, $\det(U)=xw-yz=\frac{tw^2+sz^2}{t'}\in R^{\times}$ if and only if $v(t)=v(t')$, in which case $U=\left(\begin{array}{cc}t/t'&(u+u')/t'\\0&1\end{array}\right)$.
\end{proof}

We emphasize that the class number in this inseparable case is infinite.

\subsection{A Descent Problem}\label{s4.5}
To summarize, combined with Lemma \ref{16}, Proposition \ref{17}, Lemma \ref{18}, Lemma \ref{20} and Lemma \ref{21}, we have

\begin{theorem}\label{22}
Let $(R,\pi)$ be a DVR with a valuation $v$. Suppose $f(x)=x^2-ax-b\in R[x]$ is a monic irreducible polynomial, and $\theta$ be a root of $f$. Consider $\OK=R[\theta]$, the integral extension. Then, the class number of $\OK$ is finite, unless $f$ is inseparable and $\mathrm{char}(K)=2$. To be more precise,
\begin{itemize}
\item If $2\in R$ is a unit, then any ideal class in $\OK$ can be represented by an ideal of the form
    \[R\pi^k\oplus R\left(\theta-\frac{a}{2}\right),\quad0\leq k\leq\frac{1}{2}v\left(\frac{a^2}{4}+b\right).\]

\item Suppose $2\in R$ has valuation $0<e<\infty$.
\begin{itemize}
\item If $v(a)<e$, let $m$ be the maximum integer in $\{0,1,\cdots,v(a)\}$ such that $v(b-r(r+a))\geq2m$ for some $r\in R$. Then, all the ideal classes in $\OK$ are
    \[R\pi^k\oplus R(r+\theta),\quad0\leq k\leq m.\]

\item For $v(a)\geq e$, write $\Delta:=\frac{a^2}{4}+b$, $\theta=\frac{a}{2}+\sqrt{\Delta}$.
\begin{itemize}
\item If $v(\Delta)$ is odd, then all the ideal classes in $\OK$ are
    \[R\pi^k\oplus R\left(\theta-\frac{a}{2}\right),\quad0\leq k\leq\left\lfloor\frac{1}{2}v(\Delta)\right\rfloor.\]
\item If $v(\Delta)$ is even, let $m$ be the maximum integer in $\{0,1,\cdots,e\}$ such that $v(\Delta-r^2)\geq2m+v(\Delta)$ for some $r\in R$, then all the ideal classes in $\OK$ are
    \[R\pi^k\oplus R\left(\theta-\frac{a}{2}\right),\quad0\leq k\leq\frac{1}{2}v(\Delta)\]
    and
    \[R\pi^{\frac{1}{2}v(\Delta)+i}\oplus R\left(r+\theta-\frac{a}{2}\right),\quad1\leq i\leq m~(\text{if }m\geq1).\]
\end{itemize}
\end{itemize}
\item Otherwise, suppose $0=2\in R$.
\begin{itemize}
\item If $f$ is separable, let $m$ be the maximum integer in $\{0,1,\cdots,v(a)\}$ such that $v(b-r(r+a))\geq2m$ for some $r\in R$. Then, all the ideal classes in $\OK$ are
    \[R\pi^k\oplus R(r+\theta),\quad0\leq k\leq m.\]

\item If $f$ is inseparable (in this case $a=0$ and $b=\theta^2$), all the ideal classes in $\OK$ are
\[Rs_i\oplus R(u_i+\theta),\quad i\geq0,\]
where we fix a pair $u_i,s_i$ satisfies the equation $u_i^2+s_i\pi^i=b,v(s_i)\geq i$ for each $i$.
\end{itemize}
\end{itemize}
\end{theorem}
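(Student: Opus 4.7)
The proof is essentially a bookkeeping assembly of the classifications already established in \S\ref{s4.2}--\S\ref{s4.4}, organized by the valuation $v(2)$ and, in residue characteristic $2$, by separability of $f$. My plan is to verify that each branch of the trichotomy in the statement is covered verbatim by exactly one earlier result, using throughout the normal form provided by Lemma \ref{9}: every ideal of $\OK = R[\theta]$, being a rank-$2$ free $R$-module, can be written as $R\pi^{n}\oplus R(r+\theta)$ after stripping a suitable power of $\pi$.

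First, when $2\in R^{\times}$, the completion of the square $\theta\mapsto\theta-a/2$ reduces $f$ to $x^{2}-(a^{2}/4+b)$, and Lemma \ref{16} then furnishes the representatives $R\pi^{k}\oplus R(\theta-a/2)$; the precise range $0\le k\le\tfrac{1}{2}v(a^{2}/4+b)$ comes from Lemma \ref{15} applied with $r=-a/2$. Next, when $0<v(2)=e<\infty$, I would simply quote Lemma \ref{18}, whose three subcases ($v(a)<e$; $v(a)\ge e$ with $v(\Delta)$ odd; $v(a)\ge e$ with $v(\Delta)$ even) match the three middle bullet points of the theorem line for line.

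Finally, when $\mathrm{char}(K)=2$, the separable subcase is Lemma \ref{20}, which may be viewed as the $v(2)=\infty$ degeneration of Lemma \ref{18}(1) (since $v(2r+a)=v(a)$ remains finite). For the inseparable subcase $f(x)=x^{2}-b$ with $b\notin R^{2}$, I would translate Lemma \ref{21} back into the ideal language via Theorem \ref{2}: the matrix $\bigl(\begin{smallmatrix}u_{i}&s_{i}\\ \pi^{i}&u_{i}\end{smallmatrix}\bigr)$ admits $(s_{i},\,u_{i}+\theta)$ as the row satisfying the eigenvector relation $(\ast)$ of the proof of Theorem \ref{2}, so the associated ideal is $Rs_{i}\oplus R(u_{i}+\theta)$. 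Lemma \ref{21} then asserts that the invariant $v(t)=i$ is a complete similarity invariant ranging over all of $\Z_{\ge 0}$, giving both the enumeration of classes and the infinitude of $\#\Cl(\OK)$.

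The only step requiring any genuine care is the last translation, since Lemma \ref{21} is phrased matrix-theoretically; but once Theorem \ref{2} (which applies because $f=x^{2}-b$ is still irreducible, as $b\notin R^{2}$ forces non-existence of a root in $K$) is invoked, the identification of each similarity class with the displayed ideal is immediate. I do not expect any obstacle beyond verifying that these ideals genuinely lie in the normal form of Lemma \ref{9} and are mutually inequivalent, both of which are direct consequences of the lemmas already proved.
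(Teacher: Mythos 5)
Your assembly matches the paper's own treatment: Theorem \ref{22} is presented there purely as a summary of Lemma \ref{16}, Proposition \ref{17}, Lemma \ref{18}, Lemma \ref{20} and Lemma \ref{21}, with no further argument, and your first three branches quote exactly the right results in exactly the right places. Two citations need repair, one of them substantive. The normal form $R\pi^{n}\oplus R(r+\theta)$ is not Lemma \ref{9} (which concerns the injection $R[x]/(f)\hookrightarrow K[x]/(f)$ and the characterization of non-zero-divisors); it is the reduction carried out at the start of \S\ref{s4.2}, just before Lemma \ref{15}. More importantly, in the inseparable characteristic-$2$ branch you invoke Theorem \ref{2} and justify its applicability by the irreducibility of $f=x^{2}-b$; but the hypothesis of Theorem \ref{2} is $(f,f')=1$, and here $f'=2x-a=0$, so that hypothesis fails and Theorem \ref{2} gives you nothing. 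The step you correctly singled out as the delicate one is therefore not justified as written. The repair is to use Proposition \ref{8} instead: a DVR is a PID and $f$ is monic irreducible, which is all Proposition \ref{8} requires, and it is the correspondence the paper itself relies on throughout \S\ref{s4} (``According to Proposition \ref{8} or Theorem \ref{2}\dots''). With that substitution, the identification of the similarity class of $\bigl(\begin{smallmatrix}u_{i}&s_{i}\\ \pi^{i}&u_{i}\end{smallmatrix}\bigr)$ with the ideal $Rs_{i}\oplus R(u_{i}+\theta)$, and hence both the enumeration of classes and the infinitude of $\Cl(\OK)$ in that case, goes through as you describe.
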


As an application of Theorem \ref{22}, let us now prove the remaining case of Theorem \ref{3}, which is a special case of Conjecture \ref{1} when $n=2$. Although some parts of the proof have already been addressed in \S\ref{s2}, we still use our classification to reprove it here. Let us restate the conclusion in more detail as follows:

\begin{theorem}\label{23}
Suppose $R$ is a DVR with fractional field $K$. Let $L/K$ be a finite field extension, and $S$ be the integral closure of $R$ in $L$. Then, two matrices $A,B\in\Mat_2(R)$ are similar over $R$ if and only if they are similar over $S$.
\end{theorem}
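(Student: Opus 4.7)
The 'only if' direction is trivial, so it suffices to prove the 'if' direction. Since Theorem \ref{6} already handles the case where the common characteristic polynomial $f$ is separable, we may assume $f$ is inseparable. Being monic of degree $2$, $f$ then has a double root, yielding two situations: either $f(x) = (x - \lambda)^2$ with $\lambda \in R$ (which occurs when $\mathrm{char}(K) \neq 2$, and also when $\mathrm{char}(K) = 2$ with $b \in R^2$), or $\mathrm{char}(K) = 2$ and $f(x) = x^2 - b$ with $b \in R \setminus R^2$, in which case $f$ is irreducible over $K$. The strategy is to localize $S$ at a prime $\mathfrak{P}$ above the maximal ideal of $R$, obtaining a DVR $S_\mathfrak{P}$ to which Proposition \ref{14} or Lemma \ref{21} applies, and then to compare the $R$-similarity invariant of a matrix with its $S_\mathfrak{P}$-similarity invariant; the two will differ precisely by the ramification index $e := e(\mathfrak{P}/\mathfrak{p})$.

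In the double-root case, putting $N := A - \lambda I$ and $M := B - \lambda I$, one has $N^2 = M^2 = 0$. By Proposition \ref{14} the $R$-similarity class of $A$ is determined by the single integer $v_R(N) := \min_{i,j} v_R(N_{ij})$, and the analogous statement holds for $S_\mathfrak{P}$-similarity with $v_\mathfrak{P}(N)$; this is a genuine invariant because $\GL_2$-conjugation preserves the ideal generated by the entries of a matrix. Since $v_\mathfrak{P}|_R = e \cdot v_R$, we have $v_\mathfrak{P}(N) = e \cdot v_R(N)$, so $S$-similarity (which implies $S_\mathfrak{P}$-similarity) forces $v_R(N) = v_R(M)$, and hence $R$-similarity.

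In the inseparable irreducible case, write $A = \begin{pmatrix} u & s \\ t & u \end{pmatrix}$ with $u^2 + st = b$. By Lemma \ref{21}, the $R$-similarity class of $A$ is governed by $\iota_R(A) := \min(v_R(s), v_R(t))$, and we split further on whether $b$ remains a non-square in $L$. If $b \notin L^2$, then by Gauss's lemma $f$ remains irreducible and inseparable over the DVR $S_\mathfrak{P}$, so Lemma \ref{21} there gives the $S_\mathfrak{P}$-invariant $\min(v_\mathfrak{P}(s), v_\mathfrak{P}(t)) = e \cdot \iota_R(A)$. If instead $b \in L^2$, then $\sqrt b \in S \subseteq S_\mathfrak{P}$ and we are back in the reducible situation over $S_\mathfrak{P}$ with $\lambda = \sqrt b$; here the characteristic-$2$ identity $(u - \sqrt b)^2 = u^2 - b = st$ yields
\[v_\mathfrak{P}(u - \sqrt b) = \tfrac{1}{2}\big(v_\mathfrak{P}(s) + v_\mathfrak{P}(t)\big) \geq \min(v_\mathfrak{P}(s), v_\mathfrak{P}(t)),\]
so $v_\mathfrak{P}(A - \sqrt b\, I) = \min(v_\mathfrak{P}(s), v_\mathfrak{P}(t)) = e \cdot \iota_R(A)$ as before. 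In every case the $S_\mathfrak{P}$-invariant equals $e$ times the $R$-invariant, so $S$-similarity forces equality of the $R$-invariants, establishing $R$-similarity.

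The main obstacle is the final sub-case $b \in L^2 \setminus R^2$, where $f$ factors over $S_\mathfrak{P}$ but not over $R$, so that two different classifications (Lemma \ref{21} for $R$, Proposition \ref{14} for $S_\mathfrak{P}$) must be reconciled; the characteristic-$2$ identity $(u - \sqrt b)^2 = st$ is exactly what aligns them.
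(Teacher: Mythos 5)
Your proof is correct, but it follows a genuinely different route from the paper's. The paper does not invoke Theorem \ref{6} inside the proof of Theorem \ref{23}: it organizes the argument around whether $f$ is irreducible over $R$ but splits over $S$, and then writes down explicit transition matrices in $\GL_2(S)$ matching each standard form of Proposition \ref{19} (resp.\ Lemma \ref{21}) with a standard form of Proposition \ref{14} over $S$, checking by determinant valuations which matchings are invertible. You instead dispose of the separable case by citing Theorem \ref{6} and reduce the remaining inseparable case to a single numerical invariant --- the content $\min_{i,j}v(N_{ij})$ of $N=A-\lambda I$ in the double-root case, and $\min(v(s),v(t))$ in the inseparable irreducible case --- observing that the corresponding invariant over the localization $S_{\mathfrak{P}}$ is exactly $e$ times the one over $R$, with the characteristic-$2$ identity $(u-\sqrt{b})^2=st$ reconciling Lemma \ref{21} with Proposition \ref{14} when $b$ becomes a square in $L$. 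Your argument is shorter, and by passing to $S_{\mathfrak{P}}$ it also handles cleanly a point the paper glosses over: $S$ need not be local (the paper tacitly treats $S$ as a DVR with a single uniformizer $\Pi$), whereas $S$-similarity certainly implies $S_{\mathfrak{P}}$-similarity and $S_{\mathfrak{P}}$ is a genuine DVR. What the paper's explicit computation buys in exchange is a concrete description of which $R$-classes fuse to which $S$-classes (hence the injection $\Cl(R[\theta])\hookrightarrow\Cl(S[\theta])$ noted afterwards) and a proof of the $n=2$ separable case that is independent of the $\Ext$-torsion argument of \S\ref{s2}.
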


Regarding the proof, our task solely involves establishing the 'if' part, a task achievable through our classification of similarity classes. Notably, the only non-trivial case is when the characteristic polynomial of a matrix, denoted as $f(x)=x^2-ax-b$, is irreducible in $R[x]$ but reducible in $S[x]$, i.e. $f(x)=(x-\lambda_1)(x-\lambda_2)$ for some $\lambda_1,\lambda_2\in S$. We only discuss the proof of this case for $2\in R$ is not a unit. This is further decomposed into two smaller cases: $0<v(2)<\infty$ and $v(2)=\infty$ (the $\mathrm{char}(K)=2$ case).

\begin{proof}
\begin{itemize}
\item (the $0<v(2)<\infty$ case, see Theorem \ref{6}). Let us suppose $S$ over $R$ has ramification index $e$ with uniformizer $\Pi\in S$ and assume $\pi=\Pi^e\in R$. Let $v$ and $v'$ denote valuations on $K^{\times}$ and $L^{\times}$ respectively, where $v'=e\cdot v$. Without loss of generality, we assume $m=v(a)$ in case (1) and $m=v(2)$ in case (2.2) as outlined in Proposition \ref{19}.

If $v(a)<v(2)$, we conclude that
\[v'((\lambda_1-\lambda_2)^2)=v'((\lambda_1+\lambda_2)^2-4\lambda_1\lambda_2)=v'(a^2),\]
since $v'(4b)=2v'(2)+v'(b)>v'(a^2)$; and
\[v'(\lambda_1+r)\geq v'(a),\]
since $v'(b-r(r+a))=v'(\lambda_1+r)+v'(\lambda_1+r-(a+2r))\geq2v'(a)$. Similarly, $v'(\lambda_2+r)\geq v'(a)$. Suppose
\[\left(\begin{array}{cc}-r&\Pi^{ei}\\\frac{b-r(r+a)}{\Pi^{ei}}&a+r\end{array}\right)\left(\begin{array}{cc}x&y\\z&w\end{array}\right)=\left(\begin{array}{cc}x&y\\z&w\end{array}\right)\left(\begin{array}{cc}\lambda_1&\Pi^t\\0&\lambda_2\end{array}\right)\]
for some $U:=\left(\begin{array}{cc}x&y\\z&w\end{array}\right)\in\GL_2(S)$, $i=0,1,\cdots,v(a)\leq v'(a)$. From the above calculation, when $t=\infty$ one can choose
\[U=\left(\begin{array}{cc}1&-1\\\frac{\lambda_1+r}{\Pi^{ei}}&-\frac{\lambda_2+r}{\Pi^{ei}}\end{array}\right),\]
in this case $\det(U)=(\lambda_1-\lambda_2)\Pi^{-ei}\in S^{\times}$ if and only if $i=v(a)$. When $t<\infty$, by Proposition \ref{14} we have $t<v'(a)$, so if one chooses
\[U=\left(\begin{array}{cc}1&-1\\\frac{\lambda_1+r}{\Pi^{ei}}&\frac{\Pi^t-(\lambda_2+r)}{\Pi^{ei}}\end{array}\right),\]
then $v'(\det(U))=v'((\Pi^t+(\lambda_1-\lambda_2))\Pi^{-ei})=t-ei$. This gives the similarity classes when $0\leq i<v(a)$.

If $v(a)\geq v(2)$, suppose $f(x)=x^2-\Delta\in R[x]$. We only consider the case (2.2) and the case (2.1) is easier. Since $v'(\sqrt{\Delta})=\frac{e}{2}v(\Delta)$, we have
\begin{multline*}\left(\begin{array}{cc}0&\Pi^{en}\\\Delta\Pi^{-en}&0\end{array}\right)\left(\begin{array}{cc}1&-1\\\Pi^{-en}\sqrt{\Delta}&\Pi^{-en}(\Pi^t+\sqrt{\Delta})\end{array}\right)\\=\left(\begin{array}{cc}1&-1\\\Pi^{-en}\sqrt{\Delta}&\Pi^{-en}(\Pi^t+\sqrt{\Delta})\end{array}\right)\left(\begin{array}{cc}\lambda_1&\Pi^t\\0&\lambda_2\end{array}\right),\end{multline*}
where $n=0,1,\cdots,\frac{1}{2}v(\Delta)$ such that the transition matrices are in $\GL_2(S)$, implying the corresponding $t$ are $0,e,\cdots,\frac{e}{2}v(\Delta)$. For additional matrices in case (2.2) of Proposition \ref{19}, the same calculations show that $t=e\left(\frac{v(\Delta)}{2}+i\right)$ when $i<v(2)$, and $t=\infty$ when $i=v(2)$.

\item (the $\mathrm{char}(K)=2$ case). Here we need to discuss whether $f$ is separable. However, when $f$ is separable, our proof is similar to the $v(2)<\infty$ case as above. So we only present the argument that $f$ is inseparable. Indeed, it only needs to be noticed that
    \[\left(\begin{array}{cc}0&1\\1&\frac{\theta+u_i}{\pi^i}\end{array}\right)\left(\begin{array}{cc}u_i&s_i\\\pi^i&u_i\end{array}\right)=\left(\begin{array}{cc}\theta&\pi^i\\0&\theta\end{array}\right)\left(\begin{array}{cc}0&1\\1&\frac{\theta+u_i}{\pi^i}\end{array}\right),\quad i\geq0\]
    by Proposition \ref{14} and Lemma \ref{21}, thus the similarity classes with characteristic polynomial $f(x)=x^2-\theta^2\in R[x]$ correspond one-to-one with the similarity classes with characteristic polynomial $f(x)=(x-\theta)^2\in S[x]$.
\end{itemize}
This completes the proof of our theorem.
\end{proof}

In essence, Theorem \ref{23} implies the existence of an injective map
\[\Cl(R[\theta])\hookrightarrow\Cl(S[\theta])\]
if $f$ is always irreducible.

Here we provide an example to elucidate our theorems.

\begin{example}\label{24}
Consider $R=\Z_2$ and $\OK=\Z_2[\sqrt{5}]$ with $f(x)=x^2-5$. By Theorem \ref{22} we have
\[\Cl(\OK)=\{\OK,R2\oplus R(1+\sqrt{5})\}.\]
The principal ideal $\OK$ corresponds to the similarity class
\[A:=\left(\begin{array}{cc}0&1\\5&0\end{array}\right)\in\Mat_2(R)\]
and the non-trivial ideal $R2\oplus R(1+\sqrt{5})$ corresponds to the similarity class
\[B:=\left(\begin{array}{cc}-1&2\\2&1\end{array}\right)\in\Mat_2(R).\]
Normalizing $\OK$ we obtain $S:=\Z_2[\frac{1+\sqrt{5}}{2}]$. By Theorem \ref{23}, we deduce that $A,B\in\Mat_2(S)$ are not similar. Indeed, $f(x)=(x+\sqrt{5})(x-\sqrt{5})$ splits in $S$, so all the similarity classes in $\Mat_2(S)$ must have the form
\[C:=\left(\begin{array}{cc}\sqrt{5}&0\\0&-\sqrt{5}\end{array}\right)\text{ and }D:=\left(\begin{array}{cc}\sqrt{5}&1\\0&-\sqrt{5}\end{array}\right)\]
by Proposition \ref{14}. One can check that $A$ is similar to $D$, and $B$ is similar to $C$ as matrices in $\Mat_2(S)$.
\end{example}

\section{Over Dedekind Rings of Algebraic Integers}\label{s5}
In this section, we work on the ring of integers of a number field.

\subsection{Changing Basis}\label{s5.1}

Let $K$ be a number field with integers $R$, this is a Dedekind domain.

\begin{definition}\label{25}
Let $V$ be a $n$ dimensional $K$-linear space. we call an $R$-module $J\subseteq V$ a lattice in $V$ with respect to $R$ if there is a $V$-basis such that
\[J\subseteq Rx_1+\cdots+Rx_n.\]
We say that $J$ is a full lattice if, in addition to the above property, we have $J\otimes_RK=V$.
\end{definition}

For instance, an order and its ideals are lattices.

Consider the lattice $J$ in $V$. For any non-zero vector $x$ in $J\otimes_RK$, we define the coefficient of $x$ with respect to $J$ to be the fractional ideal
\[\{k\in K:kx\in J\}.\]
The following Proposition is \cite[81.2]{om}:

\begin{proposition}\label{26}
Given a full lattice $J$ in $V$, a hyperplane $U$ in $V$, and a vector $x_0\in V\setminus U$. Then among all vectors in $x_0+U$ there is at least one whose coefficient with respect to $J$ is largest. Indeed, this coefficient denotes as $\mathfrak{a}$ is
\[\mathfrak{a}=\{k\in K:kx_0\in J+U\}.\]
Then, for any vector $x_0+u_0$ ($u_0\in U$) with coefficient $\mathfrak{a}$ we have
\[J=\mathfrak{a}(x_0+u_0)\oplus(J\cap U).\]
\end{proposition}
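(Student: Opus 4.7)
The plan is to work via the quotient map $\pi : V \to V/U$. Since $U$ is a hyperplane and $x_0 \notin U$, there is a canonical identification $V/U \cong K$ sending $\pi(x_0) \mapsto 1$; I first claim that under this identification $\pi(J) = \mathfrak{a}$. Indeed, for $k \in K$, any preimage $y \in V$ has the form $y = kx_0 + u$ with $u \in U$, so $k \in \pi(J)$ iff $kx_0 \in J + U$, which is the stated description of $\mathfrak{a}$. Since $J$ is a full lattice, $\pi(J)$ is a nonzero finitely generated $R$-submodule of $K$, that is, a nonzero fractional ideal. The upper-bound property is then immediate: if $k(x_0+u) \in J$ for some $u \in U$, applying $\pi$ gives $k \in \pi(J) = \mathfrak{a}$, so every coefficient of a vector in $x_0 + U$ lies in $\mathfrak{a}$.

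The main step is to realise $\mathfrak{a}$ as the coefficient of a single vector of the form $x_0 + u_0$. Consider the short exact sequence of $R$-modules
\[0 \longrightarrow J \cap U \longrightarrow J \stackrel{\pi}{\longrightarrow} \mathfrak{a} \longrightarrow 0.\]
Because $R$ is a Dedekind domain, $\mathfrak{a}$ is invertible, hence projective, so this sequence splits; let $\sigma : \mathfrak{a} \to J$ be a section. The subtlety is that a section only provides, for each $k \in \mathfrak{a}$, \emph{some} lift $\sigma(k) \in J$ with $\pi(\sigma(k)) = k$; a priori these lifts need not lie on a single affine line. The saving observation is that in the ambient $K$-vector space $V$ the ratio $\sigma(k)/k$ is independent of the nonzero $k \in \mathfrak{a}$: for nonzero $k_1,k_2 \in \mathfrak{a}$, choose $r_1,r_2 \in R$ with $r_1 k_1 = r_2 k_2$; the $R$-linearity of $\sigma$ then forces $r_1 \sigma(k_1) = \sigma(r_1 k_1) = \sigma(r_2 k_2) = r_2 \sigma(k_2)$, whence $\sigma(k_1)/k_1 = \sigma(k_2)/k_2$. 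Denote this common vector by $x^* \in V$; since $\pi(x^*) = 1$ it can be written uniquely as $x^* = x_0 + u_0$ with $u_0 \in U$, and by construction $k(x_0+u_0) = \sigma(k) \in J$ for every $k \in \mathfrak{a}$, so the coefficient of $x_0+u_0$ equals $\mathfrak{a}$.

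To finish with the direct sum decomposition $J = \mathfrak{a}(x_0+u_0) \oplus (J \cap U)$, note that the intersection is zero because $x_0 + u_0 \notin U$ forces $k = 0$ whenever $k(x_0+u_0) \in U$; containment of the right-hand side in $J$ is clear from what has just been proved. Conversely, for any $y \in J$, setting $k := \pi(y) \in \mathfrak{a}$ gives $y - k(x_0+u_0) = y - \sigma(k) \in J$, which is killed by $\pi$ and therefore lies in $J \cap U$. The principal obstacle in this plan is the existence of the affine lift $x_0 + u_0$: a priori an $R$-module splitting gives no guarantee of $K$-linearity in the ambient vector space, and it is precisely the projectivity of the fractional ideal $\mathfrak{a}$ (which uses that $R$ is Dedekind) together with the rigidity obtained from $R$-linearity of $\sigma$ that together pin down a well-defined line through $x_0$ along which every element of $\mathfrak{a}$ lands inside $J$.
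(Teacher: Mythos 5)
Your argument is correct. The paper itself offers no proof of this proposition --- it is quoted from O'Meara \cite[81:2]{om} --- so the comparison is with the classical argument there, which constructs the distinguished vector explicitly: writing $1=\sum a_ib_i$ with $a_i\in\mathfrak{a}$, $b_i\in\mathfrak{a}^{-1}$ and choosing $y_i\in J$ with $\pi(y_i)=a_i$, one sets $x^*=\sum b_iy_i$. That is precisely the splitting of $0\to J\cap U\to J\to\mathfrak{a}\to 0$ which you obtain abstractly from the projectivity of the invertible ideal $\mathfrak{a}$, so the two routes are the same idea in different clothing; your version adds the pleasant rigidity observation that any $R$-linear section $\sigma:\mathfrak{a}\to V$ is forced to be multiplication by a single vector $x^*$ (because $\mathfrak{a}\otimes_RK=K$), which the explicit construction gets for free. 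All the individual steps check out: $\pi(J)=\mathfrak{a}$ under the identification $V/U\cong K$, the maximality of $\mathfrak{a}$ among coefficients, the finite generation of $J$ (a submodule of a finitely generated module over the Noetherian ring $R$), and the direct-sum decomposition. The only cosmetic point is that the proposition asserts the decomposition for \emph{every} $x_0+u_0$ whose coefficient is $\mathfrak{a}$, while your last paragraph is phrased for the particular $x^*$ produced by $\sigma$; this costs one extra sentence, since for any such $u_0'$ the map $k\mapsto k(x_0+u_0')$ is itself a section of $\pi|_J$ and the identical computation $y-\pi(y)(x_0+u_0')\in J\cap U$ goes through.
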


It is not necessary to explicitly determine the $u_0$ in Proposition \ref{26} in order to assess whether an ideal is a free $R$-module. Indeed, the problems can all be reduced to the case of $[V:K]=2$ by induction, since $J\cap U$ is also a lattice. So we only need the following:

\begin{corollary}\label{27}
Let $K$ be a number field with ring of integers $R$. Suppose $L/K$ is a quadratic field extension, then a full lattice $J$ in $L$ is free as an $R$-module if and only if
\[\mathfrak{a}\cdot(J\cap K)\]
is a principal fractional ideal in $R$, where $\mathfrak{a}=\{k\in K:kx_0\in J+K\}$ for some $x_0\in L\setminus K$.
\end{corollary}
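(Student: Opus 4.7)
The plan is to apply Proposition \ref{26} with the hyperplane $U=K$ (which is indeed a codimension-one $K$-subspace of $L$ because $[L:K]=2$) and with the chosen $x_0\in L\setminus K$. The proposition then supplies a vector $u_0\in K$ such that
\[
J \;=\; \mathfrak{a}(x_0+u_0)\,\oplus\,(J\cap K),
\]
with $\mathfrak{a}=\{k\in K:kx_0\in J+K\}$ exactly the fractional ideal appearing in the statement. The explicit form of $u_0$ is irrelevant for what follows; only the direct-sum decomposition of $J$ is used.

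Next I will identify both summands as fractional ideals of $R$. Since $x_0+u_0\in L^{\times}$, multiplication by $x_0+u_0$ is an $R$-module isomorphism $\mathfrak{a}\xrightarrow{\sim}\mathfrak{a}(x_0+u_0)$. The intersection $\mathfrak{b}:=J\cap K$ is an $R$-submodule of the finitely generated $R$-module $J$, hence finitely generated (as $R$ is Noetherian), and it is nonzero: otherwise $J=\mathfrak{a}(x_0+u_0)$ would lie inside the one-dimensional $K$-subspace $K(x_0+u_0)$, contradicting that $J$ is a \emph{full} lattice in $L$. Thus $\mathfrak{b}$ is a nonzero fractional ideal of $R$ and one obtains an $R$-module isomorphism
\[
J\;\cong\;\mathfrak{a}\oplus\mathfrak{b}.
\]

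Finally, I invoke the classical Steinitz structure theorem for finitely generated projective modules over a Dedekind domain, which for any two nonzero fractional ideals of $R$ yields
\[
\mathfrak{a}\oplus\mathfrak{b}\;\cong\;R\oplus\mathfrak{a}\mathfrak{b}.
\]
Since $R\oplus\mathfrak{c}$ is a free $R$-module (of rank two) if and only if $\mathfrak{c}$ is a principal fractional ideal---the class $[\mathfrak{c}]\in\Cl(R)$ being the Steinitz invariant of the module---it follows at once that $J$ is free over $R$ precisely when $\mathfrak{a}\cdot(J\cap K)$ is principal, which is exactly the claim.

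The only nontrivial ingredient is the Steinitz cancellation $\mathfrak{a}\oplus\mathfrak{b}\cong R\oplus\mathfrak{a}\mathfrak{b}$, which I would simply cite from any standard Dedekind-domain reference (the proof reduces to replacing $\mathfrak{a},\mathfrak{b}$ by coprime representatives within their classes and then writing down an explicit $2\times 2$ matrix whose determinant generates $\mathfrak{a}\mathfrak{b}$ modulo $R^{\times}$). Everything else is routine bookkeeping once Proposition \ref{26} is in hand.
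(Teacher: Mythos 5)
Your argument is correct and follows essentially the same route as the paper: apply Proposition \ref{26} with $U=K$ to get $J\cong\mathfrak{a}\oplus(J\cap K)$, then use the Steinitz structure theorem for Dedekind domains to rewrite this as $R\oplus\mathfrak{a}\cdot(J\cap K)$ and read off freeness from principality of the Steinitz class. The only difference is that you spell out the intermediate identification of the two summands as fractional ideals, which the paper absorbs into a single citation of the structure theorem.
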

\begin{proof}
Let $V=L$ and $U=K$ in Proposition \ref{26}. By the structure theorem for modules over a Dedekind domain \cite[Proposition III.4.3]{neu}, we have
\[J\cong R\oplus(\mathfrak{a}\cdot(J\cap K)).\]
Hence, $J$ is free as an $R$-module if and only if $\mathfrak{a}\cdot(J\cap K)$ is principal.
\end{proof}

As a special case, in the context of Corollary \ref{27}, suppose $f\in R[x]$ is a quadratic monic irreducible polynomial, let $\theta$ be a root of $f$ and $L=K[\theta]$. Then, an ideal class $J$ in $\Cl(R[\theta])$ is free as an $R$-module (hence $J$ corresponds to a similarity class in $\Mat_2(R)$ by Theorem \ref{2}) if and only if $\mathfrak{a}\cdot(J\cap K)$ is principal in $R$.

In fact, the proposed algorithm in Proposition \ref{26} is suitable for general Dedekind domains, but requires complex iterations, which are not inherently difficult except for computation. We will show how to use this conclusion with a few examples.

\subsection{Examples}\label{s5.2}
Here we provide two examples, these two examples demonstrate that there can be both non-trivial free ideals and non-free ideals within class groups. One of the examples comes from the case where $R=\Z[\sqrt{-5}]$ in \cite[23]{ao}.

\begin{example}\label{28}
Let $R=\Z[\sqrt{-5}]$ with fractional field $K=\Q[\sqrt{-5}]$, consider the field extension $L:=K[\sqrt{2}]$ with minimal polynomial $f(x)=x^2-2$. The class group of $R[\sqrt{2}]$ is $\Z/2\Z$, with a non-trivial generator
\begin{align*}
J=&R[\sqrt{2}]2+R[\sqrt{2}]\left(1+\frac{\sqrt{-5}-1}{2}\sqrt{2}\right)\\=&R2+R2\sqrt{2}+R\left(1+\frac{\sqrt{-5}-1}{2}\sqrt{2}\right)+R(\sqrt{-5}+\sqrt{2}-1).
\end{align*}
In order to determine whether this ideal is a free $R$-module, we intend to apply Corollary \ref{27}. First, a direct computation shows that
\[J\cap K=R2+R(1+\sqrt{-5}),\]
which is the non-principal element in $\Cl(R)$. To study $J+K$, by the method of undetermined coefficients, the range of values for the coefficients of $\sqrt{2}$ and $\sqrt{-10}$ in $J+K$ are listed in the table below:
\begin{table}[!ht]
    \centering
    \begin{tabular}{c|c|c|c}
    \hline
        coefficient of $\sqrt{2}$&$0$&$\frac{1}{2}$&$\cdots$\\ \hline
        coefficient of $\sqrt{-10}$&$\Z$&$\frac{1}{2}+\Z$&$\cdots$\\
        \hline
    \end{tabular}
\end{table}

So one can choose $x_0=\frac{\sqrt{2}}{2}$. At this point, it is easy to compute
\[\mathfrak{a}=R2+R(1+\sqrt{-5}),\]
and therefore, there is an $R$-module isomorphism
\[J\cong R\oplus R.\]
By Theorem \ref{2} and Corollary \ref{27}, the ideal $\mathfrak{a}\cdot(J\cap K)=R2$ is principal concludes that there are exactly two similarity classes in $\Mat_2(\Z[\sqrt{-5}])$ with characteristic polynomial $x^2-2$.
\end{example}

\begin{example}\label{29}
Let $R=\Z[\sqrt{-5}]$ with fractional field $K=\Q[\sqrt{-5}]$, let $L/K$ be a quadratic field extension with minimal polynomial $f(x)=x^2-x+7$. Suppose $\theta$ is a root of $f$, the class group of $R[\theta]$ is $\Z/2\Z$, which is generated by
\begin{align*}
J=&R[\theta]3+R[\theta]\left(1+\frac{-\theta+2}{3}\sqrt{-5}\right)\\=&R3+R3\theta+R\left(1+\frac{-\theta+2}{3}\sqrt{-5}\right)+R\left(\theta+\frac{\theta+7}{3}\sqrt{-5}\right).
\end{align*}
First, one can compute
\[J\cap K=R3+R(2+\sqrt{-5}),\]
which is the non-principal element in $\Cl(R)$. To study $J+K$, by the same method, the range of values for the coefficients of $\theta$ and $\theta\sqrt{-5}$ in $J+K$ are
\begin{table}[!ht]
    \centering
    \begin{tabular}{c|c|c|c|c}
    \hline
        coefficient of $\theta$&$0$&$\frac{1}{3}$&$\frac{2}{3}$&$\cdots$\\ \hline
        coefficient of $\theta\sqrt{-5}$&$\frac{1}{3}\Z$&$\frac{1}{3}\Z$&$\frac{1}{3}\Z$&$\cdots$\\
        \hline
    \end{tabular}
\end{table}

So one can choose $x_0=\frac{\theta}{3}$. Now, it is easy to compute
\[\mathfrak{a}=R\text{ and }J\cong R\oplus\big(R3+R(2+\sqrt{-5})\big).\]
Thus, $\mathfrak{a}\cdot(J\cap K)$ is not free as an $R$-module, which means there is only one similarity class in $\Mat_2(\Z[\sqrt{-5}])$ with characteristic polynomial $x^2-x+7$. This more general result is not surprising compared to the example in \cite[23]{ao}.
\end{example}

\end{document}